\documentclass[reqno]{amsart}
\usepackage{mathrsfs}
\usepackage{}
\usepackage[top=1in, bottom=1in, left=1in, right=1in]{geometry}
\usepackage{cases}
\usepackage[colorlinks,
            linkcolor=red,
            anchorcolor=blue,
            citecolor=green
            ]{hyperref}

\newtheorem{theorem}{Theorem}[section]
\newtheorem{lemma}[theorem]{Lemma}
\theoremstyle{definition}

\newtheorem{prop}[theorem]{Proposition}
\newtheorem{conj}[theorem]{Conjecture}

\newtheorem{cor}[theorem]{Corollary}

\theoremstyle{remark}
\newtheorem{remark}[theorem]{Remark}
\renewcommand{\dim}{\mbox{\rm dim}}

\newcommand{\Gal}{\mbox{\rm Gal}}
\setlength{\baselineskip}{18pt}
\numberwithin{equation}{section}



\begin{document}\large

\title{On the elliptic curve $y^{2}=x^{3}-2rDx$ and Factoring integers }

\author{Xiumei Li, Jinxiang Zeng }
\address{Department of Mathematical Science, Tsinghua University, Beijing 100084, P. R. China}
\email{xm-li09@mails.tsinghua.edu.cn\\ cengjx09@mails.tsinghua.edu.cn}


\subjclass[2010]{Primary 11Y05, 11Y11, 11Y40: Secondary 11G05, 14G50}



\keywords{integer factoring, elliptic curve, Selmer group}

\begin{abstract}
Let $D=pq$ be the product of two distinct odd primes. Assume the parity conjecture, we construct infinitely many $r\ge 1$ such that $E_{2rD}:y^2=x^3-2rDx$ has conjectural rank one and
$v_p(x([k]Q))\not=v_q(x([k]Q))$ for any odd integer $k$, where $Q$ is the generator of the free part of $E(\mathbb Q)$. Furthermore, under the Generalized Riemann hypothesis, the minimal value of  $r$ is in $\textrm{O}(\log^4D)$. As a corollary, one can factor $D$ by computing the generator $Q$.
\end{abstract}

\maketitle

\section{Introduction and Main Results}

In this paper, we study the elliptic curve $E_{2rD}/\mathbb{Q}$ defined by $$ E_{2rD}:y^2=x^3-2rDx,\ r\in\mathbb{Z}_{\geq1}, $$ where $D=pq$ is a product of two distinct odd primes and $ 2rD $ is square-free.  Burhanuddin and Huang \cite{Burhanuddin} studied $E_D:y^2=x^3-Dx$ with $D=pq,p\equiv q\equiv 3\mod 16$. And they proved that, under the parity conjecture, $E_D$ has conjectural rank one
and $v_p(x(Q))\not=v_q(x(Q))$, where $Q$ is the generator of $E_D(\mathbb Q)/E_D(\mathbb Q)_{\textrm{tors}}$, so one can recover $p$ and $q$ from $D$ and $x(Q)$.
Furthermore, they conjectured that factoring $D$ is polynomial time
equivalent to computing the generator $Q$.

In this paper, we generalize their results to all the $D=pq$ with odd primes $p\not=q$. Moreover, for a given $D$, we prove that there are infinitely many $r\ge1$ such that $E_{2rD}$ has conjectural rank one and $v_p(x([k]Q))\not=v_q(x([k]Q))$ for any odd integer $k$, where $Q$ is the generator of $E_{2rD}(\mathbb{Q})/E_{2rD}(\mathbb{Q})_{\textrm{tors}}$. The main results are summarized as follows,

\begin{theorem}\label{theorem:main1}Let $D=pq$, where $p$ and $q$ are distinct odd primes. Under the parity conjecture, there are infinitely many positive integer $r$ such that $E_{2rD}:y^2=x^3-2rDx$ has conjectural rank one. Let $Q$ be the generator of $E_{2rD}(\mathbb{Q})/E_{2rD}(\mathbb{Q})_{\textrm{tors}}$, then $v_p(x([k]Q))\not=v_q(x([k]Q))$ for any odd integer $k$. Furthermore, assume the Generalized Riemann hypothesis, the minimal value of  $r$ is in $\textrm{O}(\log^4 D)$.
\end{theorem}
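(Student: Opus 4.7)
The strategy will generalize Burhanuddin--Huang: I replace their congruence hypothesis $p\equiv q\equiv 3\pmod{16}$ by an auxiliary parameter $r$ subject to explicit residue conditions, engineered so that (a) 2-descent pins the rank, (b) the root number is $-1$, and (c) the Selmer class of the generator is of the desired shape.

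\emph{Step 1 (2-isogeny descent).} The curve $E_{2rD}$ carries a rational 2-isogeny $\phi: E_{2rD}\to E'_{2rD}$ with $E'_{2rD}: y^2=x^3+8rDx$, and dual $\hat\phi$. I would identify
$S^{(\phi)}(E_{2rD}), S^{(\hat\phi)}(E'_{2rD}) \subset \mathbb{Q}^*/(\mathbb{Q}^*)^2$
with subgroups generated by divisors of $2rD$ subject to local solvability of the standard homogeneous spaces $dw^2 - (2rD/d)z^4 = y^2$ (resp.\ with $+8rD$) at primes in $\{2,p,q,\ell\mid r,\infty\}$. The standard inequality
$$ \mathrm{rk}\, E_{2rD}(\mathbb{Q}) + \dim_{\mathbb F_2}E_{2rD}(\mathbb{Q})[\phi] \;\le\; \dim_{\mathbb F_2}S^{(\phi)} + \dim_{\mathbb F_2}S^{(\hat\phi)} $$
then furnishes the Mordell--Weil upper bound.

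\emph{Step 2 (minimising Selmer).} Taking $r$ to be a prime and writing each local Selmer condition as a Hilbert symbol, I would choose a set of quadratic-residue conditions on $r$ modulo $8, p, q$ (together with conditions among $p,q$ which are fixed by $D$) so that every class in $S^{(\phi)}$ and $S^{(\hat\phi)}$ beyond the inevitable contributions from $E[\phi]$ is killed at some local place. This gives the upper bound $\mathrm{rk}\le 1$.

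\emph{Step 3 (root number).} Using the Halberstadt/Rohrlich-type formulas for the local root numbers of $y^2=x^3-nx$, the global root number $W(E_{2rD})$ depends only on $r\bmod 16$ and on the Legendre symbols $\bigl(\tfrac{r}{p}\bigr),\bigl(\tfrac{r}{q}\bigr),\bigl(\tfrac{p}{q}\bigr),\bigl(\tfrac{2}{p}\bigr),\bigl(\tfrac{2}{q}\bigr)$. I would refine the conditions from Step 2 (verifying compatibility) so that $W(E_{2rD})=-1$. The parity conjecture then promotes Step 2's bound to $\mathrm{rk}=1$.

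\emph{Step 4 (valuation dichotomy).} With $\mathrm{rk}=1$, the image $\delta(Q)\in\mathbb{Q}^*/(\mathbb{Q}^*)^2$ of the generator under $\phi$-descent is pinned down up to the (explicit) image of $E_{2rD}(\mathbb{Q})[2]$. The residue conditions imposed on $r$ will have been chosen so that $\delta(Q)$ lies in a class represented by a squarefree divisor of $2rD$ containing exactly one of $p,q$. Writing $x(P)=a/b^2$ in lowest terms for any infinite-order $P$, one has $a\equiv \delta(P)\pmod{(\mathbb{Q}^*)^2}$; since $\delta([k]Q)=\delta(Q)$ for odd $k$, this forces $v_p(x([k]Q))\not\equiv v_q(x([k]Q))\pmod 2$, hence inequality.

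\emph{Step 5 (GRH bound).} The admissible $r$ are exactly the primes (outside a finite bad set) lying in a union of cosets of a subgroup of index $O(1)$ in $(\mathbb{Z}/8pq\mathbb{Z})^*$; equivalently, primes with a prescribed Frobenius in the multiquadratic field $K=\mathbb{Q}(\sqrt{-1},\sqrt{2},\sqrt{p^*},\sqrt{q^*})$, whose discriminant is $O(D)$. Under GRH, effective Chebotarev (Lagarias--Odlyzko, refined by Bach--Sorenson) gives a prime with the required Frobenius of size $O(\log^2 |d_K|)=O(\log^4 D)$.

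\emph{Main obstacle.} The delicate step is Step 4 combined with the compatibility check in Steps 2--3: I must verify that the system of quadratic conditions forcing (i) minimal Selmer, (ii) $W=-1$, and (iii) $\delta(Q)$ in a single ``$p$-only'' or ``$q$-only'' coset is \emph{simultaneously satisfiable} for every pair of odd primes $p\ne q$. This reduces to a case analysis on $(p\bmod 8, q\bmod 8, \bigl(\tfrac{p}{q}\bigr))$ and requires exhibiting, in each case, an explicit admissible residue pattern for $r$; this is where the argument has to do real work and where positive density (hence infinitely many $r$) follows once non-emptiness is established.
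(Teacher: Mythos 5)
Your plan reproduces the paper's architecture almost exactly: a $\phi$/$\hat\phi$ Selmer computation via local solvability of the homogeneous spaces to get $\mathrm{rk}\le 1$, the Birch--Stephens root number formula plus the (weak) parity conjecture to get $\mathrm{rk}=1$, control of the descent image of the generator to separate $v_p$ from $v_q$, and Bach--Sorenson effective Chebotarev under GRH for the size of $r$. Your Step 4 is in fact cleaner than the paper's: you use $x([k]Q)\equiv\delta(Q)\pmod{(\mathbb{Q}^*)^2}$ for odd $k$ directly, whereas the paper proves a much more detailed local valuation lemma (exact values and signs of $v_t(x([k]Q))$ via Tamagawa numbers and the formal group) and then extracts the parity statement; for the theorem as stated your mod~$2$ argument suffices, and the triviality of $\mbox{TS}[\hat\phi]$ (which follows from the rank being $1$ and the Selmer group having order $4$) guarantees that both nontrivial non-torsion classes of $S^{(\hat\phi)}$ are realized by rational points and, multiplying to $-2rD$, each contain exactly one of $p,q$.

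The one concrete gap is Step 2's premise that $r$ can always be taken to be a single prime. The paper's case analysis shows this is not adequate when $p\equiv q\equiv 1\pmod 8$: there all odd divisors of $D$ are $\equiv 1\pmod 8$, the $2$-adic conditions on the homogeneous spaces give no help, and a single auxiliary prime does not supply enough independent quadratic conditions to simultaneously cut $S^{(\phi)}$ down to order $2$ and force both surviving $\hat\phi$-classes into the ``one of $p,q$'' shape; the paper instead takes $r=l_1l_2$ with $l_1\equiv 3$, $l_2\equiv 7\pmod 8$ and prescribed symbols $\left(\frac{D}{l_i}\right)=-1$, $\left(\frac{l_1l_2}{p}\right)=-1$. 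This is also the source of the exponent $4$: each $l_i$ is $O(\log^2 D)$ by effective Chebotarev, so $r=l_1l_2=O(\log^4 D)$. Your Step 5 as written actually yields only $O(\log^2 D)$ for a prime $r$ (note $\log^2\lvert d_K\rvert=O(\log^2 D)$ since $\log\lvert d_K\rvert=O(\log D)$, not $O(\log^4 D)$ as you state), which is a symptom of the same oversight. Once you allow $r$ to be a product of two primes in that residue case, your outline matches the paper's proof.
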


\begin{cor}\label{corollary:basic} Given $ D$, a product of two distinct odd primes. Assume the parity conjecture and the Generalized Riemann hypothesis.
Then there exists an odd integer $r\in\textrm{O}(\log^4 D)$ such that $D$ can be recovered from the data $x([2k+1]Q)$ and $D$, where $Q$ is the generator of $E_{2rD}(\mathbb{Q})/E_{2rD}(\mathbb{Q})_{\textrm{tors}}$ and $k$ is any rational integer.
\end{cor}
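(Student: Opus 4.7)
The plan is to combine Theorem \ref{theorem:main1} with a straightforward gcd procedure. By the theorem, under the parity conjecture and the Generalized Riemann hypothesis, we may fix an odd integer $r = O(\log^4 D)$ such that $E_{2rD}$ has conjectural rank one with free generator $Q$, and $v_p(x([k']Q)) \neq v_q(x([k']Q))$ for every odd integer $k' = 2k+1$. It therefore suffices to show that, given $D$ and the rational number $x([k']Q)$, one can extract a prime factor of $D$ in polynomial time.

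Write $x([k']Q) = a/b$ in lowest terms and set $\alpha = v_p(a/b)$, $\beta = v_q(a/b)$. The theorem forces $\alpha \neq \beta$, so at least one of these valuations is nonzero; moreover, since $\gcd(a,b) = 1$, each of $p$ and $q$ divides at most one of $a, b$. The extraction algorithm is then: compute $g = \gcd(a, D)$; if $g \in \{p, q\}$, output $g$; if $g = D$, replace $a$ by $a/D$ and repeat; if $g = 1$, restart the loop with $b$ in place of $a$. In the case $g = D$, both primes divide $a$ but to different multiplicities (because $\alpha \neq \beta$ and $v_p(b) = v_q(b) = 0$), so after at most $\log_2 |a|$ divisions by $D$ one multiplicity drops to zero and the gcd yields a proper prime factor of $D$. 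The total work is $O(\log\max(|a|,|b|))$ gcd computations, polynomial in the input size.

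The substantive content of the corollary is entirely carried by Theorem \ref{theorem:main1}; the extraction step above is essentially mechanical, and the only minor subtlety is handling the case in which $p$ and $q$ both divide the same one of $a, b$, which is resolved by iteratively dividing out $D$ until the gcd becomes a proper factor.
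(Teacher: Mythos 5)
Your proposal is correct and takes essentially the same route as the paper, which simply deduces the corollary directly from Theorem \ref{theorem:main1}; your only addition is to spell out the mechanical gcd-extraction step that the paper leaves implicit (and which, by Lemma \ref{lemma:order}, is even simpler than you allow, since for odd multiples exactly one of $p,q$ has positive valuation, so the case $\gcd(a,D)=D$ never occurs).
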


\begin{remark} Burhanuddin and Huang \cite{Burhanuddin} have proved that, if the naive height of the generator of the free part of $E_D(\mathbb{Q})$ with $D=pq,p\equiv q\equiv 3(\bmod 16)$, grows polynomially in $\log \Delta$, where $\Delta$ is the minimal discriminant of $E_D$, then factoring $D$ is polynomial time reducible to computing the generator of the group $E_D(\mathbb{Q})$. On the other hand, for general $D$, Lang \cite{Lang} conjectured that the log height of the generator is (approximately) bounded above by $D^{1/12}$ and is widely believed that this upper bound is accurate for "most" elliptic curves. More precisely, it is a folklore conjecture that the probability of an elliptic curve $E_D(\mathbb{Q})$ has a generator with height $h(P)\le D^{1/13}$ is less than $D^{-c}$ for some absolute constant $c$. So although the additional parameter $r$ leads to a family of curves which are suitable for factoring $D$, it's unlikely to find one, whose height of generator grows polynomially in $\log \Delta$.
\end{remark}
The paper is organized as follows. In section 2 we use the 2-descent method to compute the Selmer groups of these elliptic curves $E_{2rD}$ and their isogenous curves $E_{2rD}'$,
which will give an upper bound of rank of Mordell-Weil group $E_{2rD}(\mathbb{Q})$, and we construct infinitely many such curves $E_{2rD}$ such that $r_{E_{2rD}}\leq 1$ for an appropriately chosen value of the parameter $r$. In section 3 we calculate the global root number of these elliptic curves $E_{2rD}$, which will give a lower bound of rank of Mordell-Weil group $E_{2rD}(\mathbb{Q})$ under the weak parity conjecture. In section 4 we study the property of non-torsion points of Mordell-Weil group $E_{2rD}(\mathbb{Q})$ such as arithmetic property and canonical height. In section 5 we prove our main results.

\section{Computation of the Selmer groups}

In this section, we determine the Selmer group $S^{(\phi )} (E_{2rD} / \mathbb{Q} )$ of the elliptic curve $E_{2rD}$ defined as in the introduction.

Let
$$E_{2rD}^{\prime }: y^{2} = x^{3} + 8rDx $$
be its 2-isogenous elliptic curve
and $$ \phi : E_{2rD} \longrightarrow E_{2rD}^{\prime }\ \text{and}\
 \widehat{ \phi } : E_{2rD}^{\prime } \longrightarrow E_{2rD}$$ be the corresponding 2-isogeny defined as
\[
\phi((x,y))=\left(\frac{y^2}{x^2},\frac{-y(2rD+x^2)}{x^2}\right) \quad\text{ and }
\quad \widehat{\phi}((x,y))=\left(\frac{y^2}{4x^2},\frac{y(8rD-x^2)}{8x^2}\right)
\]
respectively.

 Let $ S = \{ \infty \} \bigcup \{ \text{primes \ dividing } \ 2rD \}$
and $ \mathbb{Q}(S, 2)=<S>$ be the subgroup of $\mathbb{Q}^*/\mathbb{Q}^{*2}$. For $ d \in \mathbb{Q}(S, 2)$, the corresponding homogeneous spaces
are defined as follows,
\[
C_{d} : \ d W^{2} = d^{2} + 8rDZ^{4}\quad\text{and}\quad
 C_{d}^{\prime} : \ d W^{2} = d^{2} -  2rDZ^{4}.
\]
By Proposition 4.9 in \cite{Silverman1}, the Selmer groups have the following identifications:
$$ \{1,2rD\}\subseteq S^{(\phi )} (E_{2rD} / \mathbb{Q} ) \cong \{ d \in \mathbb{Q}(S, 2) : \ C_{d} (
\mathbb{Q}_{v}) \neq \emptyset  \text{ for all } v \in S \}, $$
$$\{1,-2rD\}\subseteq S^{(\widehat{\phi })} (E_{2rD}^{\prime } / \mathbb{Q} ) \cong \{ d \in \mathbb{Q}(S, 2) : \ C_{d}^{'} (
\mathbb{Q}_{v}) \neq \emptyset  \text{ for all } v \in S \}. $$

For $ d \in \mathbb{Q}(S,2) $ and $2\not|d$, the following two lemmas give the sufficient and necessary conditions for $d\in S^{(\phi )} (E_{2rD} / \mathbb{Q} )$ or $d\in S^{(\widehat{\phi })} (E_{2rD}^{\prime } / \mathbb{Q} )$.

\begin{lemma}\label{prop:mod1} Let $C_{d}:W^{2}=d+\frac{8rD}{d}Z^{4}$ with $ d|rD$, then
\begin{itemize} \item[(1)] $C_{d}(\mathbb{Q}_{2})\neq \emptyset$ if and only if $d \equiv 1(\bmod\ 8)$;
\item[(2)] $C_{d}(\mathbb{Q}_{t})\neq \emptyset$ if and only if $(\frac{d}{t})=1$ for any prime $t|\frac{rD}{d}$;
\item[(3)] $C_{d}(\mathbb{Q}_{l})\neq \emptyset$ if and only if $(\frac{2rD/d}{l})=1$ for any prime $l|d$.
\end{itemize}
\end{lemma}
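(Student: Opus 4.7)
My plan is to prove each of the three equivalences by a valuation analysis at the relevant prime followed by a Hensel's lemma lift. The key standing facts I will use repeatedly are that $2rD$ is squarefree and $d\mid rD$, so for every prime $\ell\mid 2rD$ one has $v_\ell(d)\in\{0,1\}$ with the complementary factor $\frac{2rD}{d}$ having the opposite valuation; in particular, if $t\mid \frac{rD}{d}$ then $v_t(d)=0$ and $v_t\bigl(\tfrac{8rD}{d}\bigr)=1$, while if $l\mid d$ then $v_l(d)=1$ and $v_l\bigl(\tfrac{8rD}{d}\bigr)=0$, and at $2$ one has $v_2(d)=0$, $v_2\bigl(\tfrac{8rD}{d}\bigr)=3$.

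For part (2), given a putative solution $(W,Z)\in\mathbb{Q}_t^2$, I first rule out $v_t(Z)<0$: if $v_t(Z)=-n<0$ then $v_t\bigl(\tfrac{8rD}{d}Z^4\bigr)=1-4n$ is odd and strictly smaller than $v_t(d)=0$, forcing $v_t(W^2)=1-4n$, which contradicts $v_t(W^2)$ being even. Thus $v_t(Z)\ge 0$, and reduction mod $t$ gives $W^2\equiv d\pmod t$ with $d$ a $t$-adic unit, whence $\left(\frac{d}{t}\right)=1$. The converse is immediate: if $d$ is a square mod $t$, Hensel lifts a mod-$t$ square root of $d$ to $W\in\mathbb{Z}_t$, yielding the solution $(W,0)$.

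For part (3), I analyse three subcases in $v_l(Z)$. If $v_l(Z)\ge 1$, then the right-hand side has $v_l=v_l(d)=1$ (the other term has $v_l\ge 4$), while $v_l(W^2)$ is even, contradiction. If $v_l(Z)=-n<0$, I multiply through by $l^{4n}$, so after absorbing powers of $l$ into new unit variables the equation becomes $w^2 = l^{4n}d+\tfrac{8rD}{d}z^4$ with $z,w$ units; reducing mod $l$ yields $w^2\equiv\tfrac{8rD}{d}z^4\pmod l$, forcing $\bigl(\tfrac{2rD/d}{l}\bigr)=1$ since $\bigl(\tfrac{8}{l}\bigr)=\bigl(\tfrac{2}{l}\bigr)$. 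The remaining case $v_l(Z)=0$ reduces mod $l$ to $W^2\equiv\tfrac{8rD}{d}Z^4\pmod l$ and gives the same condition. Conversely, assuming $\bigl(\tfrac{2rD/d}{l}\bigr)=1$, I set $Z=1$ and look at $f(W)=W^2-d-\tfrac{8rD}{d}$; mod $l$ this becomes $W^2\equiv\tfrac{8rD}{d}\pmod l$, a nonzero square by hypothesis, and Hensel lifts it to $\mathbb{Z}_l$.

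For part (1), the same parity of valuation argument at $2$ rules out $v_2(Z)<0$, since $v_2\bigl(\tfrac{8rD}{d}Z^4\bigr)=3-4n$ would be odd. For $v_2(Z)\ge 0$, reducing mod $8$ kills the second term and leaves $W^2\equiv d\pmod 8$; since $d$ is odd, both $W$ must be odd and $d\equiv 1\pmod 8$. Conversely, if $d\equiv 1\pmod 8$, then $d$ is a $2$-adic square (standard Hensel for $2$), and $(\sqrt{d},0)$ lies on $C_d(\mathbb{Q}_2)$.

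The main obstacle is the subcase $v_l(Z)<0$ in part (3), where the naive valuation argument used in parts (1) and (2) does not yield an immediate contradiction; one has to rescale and reduce mod $l$ to recover the correct quadratic residue condition. Everything else is routine Hensel.
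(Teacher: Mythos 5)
Your proof is correct and follows essentially the same route as the paper: a valuation/parity analysis at each relevant place to force the necessary congruence or residue condition, followed by a Hensel lift for sufficiency. You are in fact a bit more careful than the paper in two spots --- you justify $v_2(Z)\ge 0$ and $v_t(Z)\ge 0$ rather than asserting them, and in the converse of (3) you correctly lift from the point with $Z=1$ (the paper's text garbles this step by evaluating $f$ at $Z=0$, where the approximate root condition fails, even though the intended argument is clearly the one you give).
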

\begin{proof} For simplicity, let $f(Z,W):=W^{2}-\frac{8rD}{d}Z^{4}-d$.
\begin{itemize}\item[(1)] If $C_{d}(\mathbb{Q}_{2})\neq\emptyset $, take any point $(z,w)\in C_{d}(\mathbb{Q}_{2})$, then $v_{2}(z)\geq 0$, $v_{2}(w)=0$ and $w^{2}=d+\frac{8rD}{d}z^{4}$.
Taking the valuation $v_{2}$ at $2$ of both sides, we have $d\equiv1(\bmod\ 8)$. On the other hand, if $d\equiv1(\bmod\ 8)$, then $v_{2}(f(0,1))>2v_{2}(\frac{\partial{f}}{\partial{W}}(0,1))$. By Hensel's lemma in \cite{Silverman1},
$f(Z,W)=0$ has a solution in $\mathbb{Q}_{2}^{2}$, that is, $C_{d}(\mathbb{Q}_{2})\neq\emptyset $.
\item[(2)] For any prime $t|\frac{rD}{d}$, if $C_{d}(\mathbb{Q}_{t})\neq\emptyset $, take any point $(z,w)\in C_{d}(\mathbb{Q}_{t})$, then $v_{t}(z)\geq 0$, $v_{t}(w)=0$ and $w^{2}=d+\frac{8rD}{d}z^{4}$.
Taking the valuation $v_{t}$ at $t$ of both sides, we have $(\frac{d}{t})=1$. On the other hand, if $(\frac{d}{t})=1$, then there exists an integer $a$ such that $a^{2}\equiv d(\bmod t)$, therefore $v_{t}(f(0,a))>2v_{t}(\frac{\partial{f}}{\partial{W}}(0,a))$. By Hensel's lemma,
$f(Z,W)=0$ has a solution in $\mathbb{Q}_{t}^{2}$, that is, $C_{d}(\mathbb{Q}_{t})\neq\emptyset $.
\item[(3)] For any prime $l|d$, let $g(Z_{1},W_{1};l,i):=W_{1}^{2}-\frac{8rD}{d}Z_{1}^{4}-dl^{4i}$ for some non-negative integer $i$. If $C_{d}(\mathbb{Q}_{l})\neq\emptyset $, take any point $(z,w)\in C_{d}(\mathbb{Q}_{l})$, then $z=l^{-i}z_{1}$, $w=w_{1}^{-2i}$, where $v_{l}(z_{1})=v_{l}(w_{1})=0$, $i\geq0$ and $(z_{1},w_{1})$ satisfying $w_{1}^{2}=dl^{4i}+\frac{8rD}{d}z_{1}^{4}$.
Taking the valuation $v_{l}$ at $l$ of both sides, we have $(\frac{2rD/d}{l})=1$. On the other hand, if $(\frac{2rD/d}{l})=1$, then there exists an integer $b$ such that $b^{2}\equiv \frac{8rD}{d}(\bmod\ l)$, therefore we have $v_{l}(f(0,b))>2v_{l}(\frac{\partial{f}}{\partial{W}}(0,b))$. By Hensel's lemma,
$g(Z_{1},W_{1};l,i)=0$ has a solution in $\mathbb{Q}_{l}^{2}$, that is, $C_{d}(\mathbb{Q}_{l})\neq\emptyset $.
\end{itemize}
\end{proof}

\begin{lemma}\label{prop:mod2} Let $C_{d}':W^{2}=d-\frac{2rD}{d}Z^{4}$ with $ d|rD$, then
\begin{itemize} \item[(1)] $C_{d}'(\mathbb{Q}_{2})\neq \emptyset$ if and only if $ d-2rD/d \equiv 1 (\bmod 8)$ or $d \equiv 1 (\bmod 8)$;
\item[(2)] $C_{d}'(\mathbb{Q}_{t})\neq \emptyset$ if and only if $(\frac{d}{t})=1$ for any prime $t|\frac{rD}{d}$;
\item[(3)] $C_{d}'(\mathbb{Q}_{l})\neq \emptyset$ if and only if $( \frac{-2rD/d}{l})=1$ for any prime $l|d$.
\end{itemize}
\end{lemma}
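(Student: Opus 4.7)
The plan is to mirror the Hensel-lifting template of Lemma \ref{prop:mod1}, adjusting for the sign change in the defining equation $W^2 = d - \frac{2rD}{d}Z^4$ and for the fact that $v_2\!\left(\frac{2rD}{d}\right)=1$; this valuation comes from $2rD$ being squarefree together with $d\mid rD$, which forces both $d$ and $rD/d$ to be odd. For parts (2) and (3), the proof of Lemma \ref{prop:mod1} carries over almost verbatim: taking $v_t$ (respectively $v_l$) of both sides of $w^2 = d - \frac{2rD}{d}z^4$ at a local point yields the necessity of $\left(\frac{d}{t}\right)=1$ (respectively $\left(\frac{-2rD/d}{l}\right)=1$, with the minus sign arising from the reversed coefficient of $Z^4$), and sufficiency follows from Hensel's lemma applied to the analogous auxiliary polynomials, including the $l$-adic rescaling $g(Z_1,W_1;l,i)$ used to handle the possibility that the coordinate $Z$ has a pole at $l$.

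The only genuinely new ingredient is part (1) at the prime $2$. For sufficiency, if $d\equiv 1\pmod 8$ then Hensel applied to $f(Z,W):=W^2 - d + \frac{2rD}{d}Z^4$ at the point $(0,1)$ produces a $\mathbb{Q}_2$-point, while if $d - \frac{2rD}{d}\equiv 1\pmod 8$ then Hensel at $(1,1)$ does the job; in both cases one checks the standard condition $v_2(f)>2v_2(\partial f/\partial W)$. For necessity, take any $(z,w)\in C_d'(\mathbb{Q}_2)$ and first rule out $v_2(z)<0$ by the parity observation that in that case $v_2\!\left(d-\frac{2rD}{d}z^4\right) = 1 - 4v_2(z)$ is odd, contradicting $v_2(w^2)$ being even. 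Hence $z\in\mathbb{Z}_2$ and $w\in\mathbb{Z}_2^\times$. If $v_2(z)\ge 1$, then $z^4\in 16\mathbb{Z}_2$ and the equation reduces to $w^2\equiv d\pmod 8$, forcing $d\equiv 1\pmod 8$; if $v_2(z)=0$, then $z^4\equiv 1\pmod{16}$ and one gets $w^2\equiv d - \frac{2rD}{d}\pmod 8$, forcing $d - \frac{2rD}{d}\equiv 1\pmod 8$.

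The main obstacle I foresee is the valuation-parity step ruling out $v_2(z)<0$; the rest is a direct transcription of Lemma \ref{prop:mod1}, with the sign bookkeeping in the Legendre symbol of part (3) the only other thing to watch carefully.
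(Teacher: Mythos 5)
Your proof is correct and follows exactly the route the paper intends: the paper's own proof of this lemma is literally ``similar to Lemma~\ref{prop:mod1}'', i.e.\ the same valuation-plus-Hensel template, with your case split $v_2(z)\ge 1$ versus $v_2(z)=0$ at the prime $2$ correctly producing the two alternative congruences. The only blemish is the exponent in the step ruling out $v_2(z)<0$: the valuation of $\frac{2rD}{d}z^4$ is $1+4v_2(z)$, not $1-4v_2(z)$ (it is still odd, and negative, so it is the valuation of the whole right-hand side and the contradiction with $v_2(w^2)$ being even stands).
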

\begin{proof} The proof is similar to lemma \ref{prop:mod1}.
\end{proof}

We now determine the Selmer groups of the elliptic curves appeared
in our theorems in the introduction, by the lemmas above. For simplicity, the class of odd integer $m$ in $\mathbb Z/8\mathbb Z$ will be denoted by $\overline m$, and define $a_{\overline
3}=a_{\overline 7}=\overline 5$ and $a_{ \overline{5}}=\overline 3$. Define the set $A_D$ of parameters $r$ as follows,

$
A_D=\left\{
        \begin{array}{ll}
         \{l~|~l\in \overline{q},\left(\frac{D}{l}\right)=-(\frac{l}{p})=-1\},&\text{ if }D\equiv7\mod8,p\not\equiv1\mod 8 \\
         \{l~|~Dl\in \overline{1},\left(\frac{D}{l}\right)=-(\frac{l}{p})=-1\},&\hbox{ if }D\equiv3,5\mod8,p\not\equiv1\mod 8 \\
          \{l~|~l\in a_{\overline{q}}, \left(\frac{D}{l}\right)=-1\},&\hbox{ if }D\equiv 1\mod8,p\not\equiv1 \mod 8 \\
          \{l ~|~l\in a_{\overline{q}},\left(\frac{D}{l}\right)=\left(\frac{p}{l}\right)=-1\},&\hbox{ if }D\not\equiv 1 \mod8,p\equiv1 \mod 8 \\
          \{l_1l_2 ~|~l_{1}\in \overline{3},l_{2}\in \overline{7},\left(\frac{D}{l_1}\right)=\left(\frac{D}{l_2}\right)=-1, \left(\frac{l_1l_2}{p}\right)=-1 \},& \hbox{ if }D\equiv 1 \mod8,p\equiv 1\mod 8
        \end{array}
      \right.
$
where $l,l_1,l_2$ are primes. By density theorem, $A_{D}$ is an infinite set. The Selmer group of $E_{2rD}$ is determined by the following proposition.

\begin{prop}\label{them:main3}~~Let $D=pq$ be the product of two distinct odd primes $p,q$. Then
for any $r\in A_D$, we have
   $$  S^{(\phi )}(E_{2rD} /\mathbb{Q} ) \cong \mathbb{Z}/2\mathbb{Z}\quad\text{ and }\quad
   S^{(\widehat{\phi} )}(E_{2rD}^{'}/ \mathbb{Q})  \cong ( \mathbb{Z}/2\mathbb{Z} )^{2}. $$
\end{prop}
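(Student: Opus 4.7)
The plan is to enumerate representatives of $\mathbb{Q}(S,2)$ and apply Lemmas \ref{prop:mod1} and \ref{prop:mod2} to determine which classes $d$ land in each Selmer group. Since $S=\{\infty,2,p,q\}\cup\{\text{primes dividing }r\}$, a system of representatives consists of square-free integers $d$ whose prime factors lie in $\{2,p,q\}\cup\{\text{primes of }r\}$ together with a choice of sign, so $|\mathbb{Q}(S,2)|=2^5$ or $2^6$ depending on whether $r$ is a single prime or a product of two. At the archimedean place, $C_d$ requires $d>0$, so only positive $d$ can lie in $S^{(\phi)}$; $C_d'$ places no such restriction, which already accounts for the asymmetry between the two Selmer groups.

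First I would fix one of the five branches in the definition of $A_D$, record the mod-$8$ class and Legendre-symbol data forced on $p,q$ and the auxiliary prime(s), and then run through each remaining $d\in\mathbb{Q}(S,2)$. For odd $d$ dividing $rD$, the three parts of Lemma \ref{prop:mod1} (respectively Lemma \ref{prop:mod2}) translate the local solvability of $C_d$ (respectively $C_d'$) directly into a congruence on $d$ modulo $8$ and a collection of Legendre symbols $(\frac{d}{t})$ and $(\frac{\pm 2rD/d}{l})$. For even representatives $d=2d_0$ one either runs the analogous Hensel argument at the relevant places, or exploits that $d$ and $2rD/d$ represent the same Selmer class once the factor $2rD$ (which sits in the Selmer group automatically) is divided out. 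The conditions packaged into $A_D$ are chosen precisely so that, in each branch, quadratic reciprocity turns these local constraints into mutually inconsistent ones for every $d$ other than $1$ and $2rD$ on the $\phi$-side, and for every $d$ outside a particular four-element subgroup on the $\widehat{\phi}$-side.

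The main difficulty is combinatorial rather than conceptual: the five branches of $A_D$ together with the two residue classes of $p$ mod $8$ produce many subcases that must be organized so that the reciprocity relations line up. The $\widehat{\phi}$-side is the more delicate, because Lemma \ref{prop:mod2}(1) admits the disjunction $d-2rD/d\equiv 1\pmod 8$ \emph{or} $d\equiv 1\pmod 8$; this extra branch is exactly what forces $|S^{(\widehat{\phi})}|=4$ while $|S^{(\phi)}|=2$, and it has to be tracked carefully in every case. The most intricate branch is the last, with $D\equiv p\equiv 1\pmod 8$, where a single auxiliary prime cannot obstruct enough classes at once; this is why $A_D$ is defined there through two primes $l_1\in\overline{3}$ and $l_2\in\overline{7}$, and the $\mathbb{Q}(S,2)$ one walks through doubles in size. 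Beyond this bookkeeping, no new ingredients are needed: the argument reduces entirely to the two lemmas combined with quadratic reciprocity and its supplementary laws.
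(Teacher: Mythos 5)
Your proposal matches the paper's own proof in approach and substance: the paper likewise enumerates $\mathbb{Q}(S,2)$ and applies Lemmas \ref{prop:mod1} and \ref{prop:mod2} branch by branch (carrying out only the $r=l_1l_2$ case explicitly, split according to $\bigl(\frac{q}{p}\bigr)=\pm1$, and asserting the others are similar). Your additional observations --- the archimedean constraint forcing $d>0$ on the $\phi$-side, the reduction of even and sign classes via the automatic elements $2rD$ and $-2rD$, and the role of the disjunction in Lemma \ref{prop:mod2}(1) in producing $|S^{(\widehat{\phi})}|=4$ --- are all correct and consistent with what the paper does implicitly.
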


\begin{proof} By the lemmas above, we can determine the elements in the Selmer groups of $E_{2rD}$ and $E_{2rD}'$ easily, where
$r=1,l,l_1l_2$. We only take $r=l_1l_2$ for an example.

In this case, $S=\{\infty\}\cup\{2,p,q,l_1,l_2\}$ and $\mathbb Q(S,2)=<-1,2,p,q,l_1,l_2>$, where
$p\equiv q\equiv 1,l_1\equiv 3,l_2\equiv 7\mod 8$ and they satisfying
$(\frac D{l_1})=(\frac D{l_2})=-1$ and $(\frac {l_1}p)(\frac {l_2}p)=-1$. Without loss of generality, we can
assume $(\frac {l_1}p)=1,(\frac {l_2}p)=-1$ and $(\frac {l_1}{l_2})=1$. By Lemma \ref{prop:mod1},
$S^{(\phi )}(E_{2rD} /\mathbb{Q} )\cong\{1, 2Dl_1l_2\}\cong \mathbb{Z}/2\mathbb{Z}$.

For $S^{(\widehat{\phi} )}(E_{2rD}^{'}/ \mathbb{Q})$, by Lemma \ref{prop:mod2}, one can check
\[
S^{(\widehat{\phi} )}(E_{2rD}^{'}/ \mathbb{Q})=\left\{
        \begin{array}{ll}
         \{ 1, -pl_{2}, 2ql_{1}, -2rD \}, & \hbox{ if }(\frac{q}{p} ) = 1 \\
         \{ 1, -2pl_{1}, ql_{2}, -2rD \} , & \hbox{ if }(\frac{q}{p} ) = -1.
        \end{array}
      \right.
\]
It follows that $S^{(\widehat{\phi} )}(E_{2rD}^{'}/ \mathbb{Q})  \cong ( \mathbb{Z}/2\mathbb{Z} )^{2}$.
\end{proof}

Use the exact sequences listed in \cite{Silverman1}, we have

$$  \begin{array}{l} 0
\longrightarrow \frac{E_{2rD}^{'}(\mathbb{Q} )}{\phi (E_{2rD}(\mathbb{Q}))} \longrightarrow
S^{(\phi )}(E_{2rD} /\mathbb{Q} ) \longrightarrow \text{TS}(E_{2rD} /\mathbb{Q})[\phi ]
\longrightarrow 0 \\
0 \longrightarrow \frac{ E_{2rD} (\mathbb{Q} )}{\widehat{\phi }(E_{2rD}^{'}(\mathbb{Q} ))}
\longrightarrow S^{( \widehat{\phi })}(E_{2rD}^{'}/\mathbb{Q}  )
\longrightarrow \text{TS}( E_{2rD}^{'}/\mathbb{Q}  )[\widehat{\phi } ]
\longrightarrow 0 \\
0 \longrightarrow \frac{E_{2rD}^{'}(\mathbb{Q} )[\widehat{ \phi }]}{\phi
(E_{2rD}(\mathbb{Q})[2])} \longrightarrow \frac{E_{2rD}^{'}( \mathbb{Q} )}{\phi ( E_{2rD} ( \mathbb{Q} )) }
\longrightarrow \frac{E_{2rD}( \mathbb{Q} )}{2E_{2rD}( \mathbb{Q} )} \longrightarrow
\frac{E_{2rD}( \mathbb{Q} )}{\widehat{ \phi }(E_{2rD}^{'}( \mathbb{Q} ))} \longrightarrow  0 \quad
\end{array} $$
which follows that
\[
r_{E_{2rD}} + \dim_{2}(\text{TS}( E_{2rD} / \mathbb{Q}
)[\phi ] ) + \dim_{2}(\text{TS}(E_{2rD}^{'}/\mathbb{Q} )[\widehat{\phi }] ) \]
\[ =\dim_{2}(S^{(\phi )}(E_{2rD} /\mathbb{Q} ) ) + \dim_{2}(S^{(\widehat{\phi
} )}(E_{2rD}^{'}/ \mathbb{Q}) ) - 2,\]
where dim$_2$ denotes the dimension of an $\mathbb{F}_2$-vector space. The equality above gives an upper bound of $r_{E_{2rD}}$ as follows,
\[
r_{E_{2rD}}\leq \dim_{2}(S^{(\phi )}(E_{2rD} /\mathbb{Q} ) ) + \dim_{2}(S^{(\widehat{\phi
} )}(E_{2rD}^{'}/ \mathbb{Q}) ) - 2\]
which implies that \begin{equation}\label{inequality}r_{E_{2rD}}\leq 1,\ \text{for any}\ r\in A_{D}. \end{equation}
In the next section, we study the global root number of $E_{2rD}$, which will give a lower bound of $r_{E_{2rD}}$.

\section{Computation of the conjectural rank}

Let $ E $ be an elliptic curve over $ \mathbb{Q} $ with conductor $ N_{E}. $ By the Modularity Theorem \cite{Edixhoven},
the L-function admits an analytic continuation to an entire function satisfying the functional equation
$$ \Lambda_{E}( 2- s ) = W( E )\Lambda_{E}( s ),  \ \ \text{ where } \Lambda_{E}( s ) = N_{E}^{s/2}( 2\pi )^{-s}\Gamma( s )L_{E}( s ), $$
and $ W( E ) = \pm 1 $  is called the global root number.\\
\indent Let $ r_{E}^{an} $ and $ r_{E} $ be the analytic rank and arithmetic rank of $ E $ respectively, where $ r_{E}^{an} $ is the order of
vanishing of $ L_{E}( s ) $ at $ s = 1 $ and $ r_{E} $ is the rank of the abelian group $ E( \mathbb{Q} ). $ For the parity of $ r_{E} $, there is a famous conjecture:
\begin{conj}\label{conj:main}( \textbf{Parity Conjecture} )  $ (-1)^{r_{E}} = W( E ) $. \end{conj}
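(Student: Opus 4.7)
The statement is the celebrated Parity Conjecture, which is a well-known open problem in the arithmetic of elliptic curves, so a complete proof is out of reach in full generality; let me outline the standard strategy and locate the obstructions.

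The analytic half of parity is an immediate corollary of the functional equation stated just above. Since $L_E(s)$ is entire and $\Lambda_E(2-s) = W(E)\Lambda_E(s)$, comparing Taylor expansions at the central point $s=1$ forces the order of vanishing $r_E^{an}$ of $L_E(s)$ at $s=1$ to satisfy $(-1)^{r_E^{an}} = W(E)$. Thus the Parity Conjecture reduces to the parity statement $r_E \equiv r_E^{an} \pmod 2$, which is itself an immediate consequence of the full Birch--Swinnerton-Dyer conjecture (which predicts equality $r_E = r_E^{an}$).

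A more tractable route is via the $p$-parity conjecture for a prime $p$: by work of T.~and V.~Dokchitser, one expresses the parity of the $p$-Selmer rank $\dim_{\mathbb{F}_p} \mathrm{Sel}_p(E/\mathbb{Q})$ as a product of local invariants at places of bad reduction and at $\infty$, and this product matches the global root number $W(E)$. Granting the finiteness of the $p$-primary part $\text{TS}(E/\mathbb{Q})[p^{\infty}]$, one has the exact sequence identification $\dim_{\mathbb{F}_p} \mathrm{Sel}_p(E/\mathbb{Q}) \equiv r_E \pmod 2$, and combining with the $p$-parity theorem yields the Parity Conjecture.

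The main obstacle is precisely the finiteness of the Tate--Shafarevich group (or even of a single $p$-primary component), which is itself a longstanding open conjecture; without such input one can at best prove parity of the $p$-Selmer rank rather than of the Mordell--Weil rank. Within the present paper, one therefore adopts the Parity Conjecture as a hypothesis and combines it with the root number computation carried out in Section~3 and the Selmer rank bound $r_{E_{2rD}} \le 1$ of inequality~\eqref{inequality} to pin down $r_{E_{2rD}} = 1$ exactly whenever the global root number $W(E_{2rD}) = -1$.
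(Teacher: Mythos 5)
This statement is a conjecture, not a theorem: the paper offers no proof and simply adopts it as a hypothesis (indeed, as the remark following it notes, only the weak form $W(E)=-1\Rightarrow r_E\ge 1$ is used). Your write-up correctly recognizes this, accurately summarizes the known reductions (analytic parity from the functional equation, $p$-parity theorems of the Dokchitsers, and the obstruction of finiteness of the Tate--Shafarevich group), and correctly describes how the paper combines the hypothesis with the Selmer bound and root number computation --- so there is nothing to object to.
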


\begin{remark} In this paper, we only need the weak form of the parity conjecture: \[W( E ) = -1 \Longrightarrow r_{E} \geq 1.\]
 For the detail of the parity conjecture, see the recent works by Tim Dokchiter \cite{Dokchitser}. \end{remark}
This weak parity conjecture gives a lower bound of $r_{E}$, while $W( E ) = -1$.
Due to the special choice of our elliptic curve, the global root number is equal
to $-1$ by the following lemma.
\begin{lemma}\label{lemma:basic} Let $ E_{2N}: y^{2} = x^{3} - 2Nx $ be an elliptic curve over $ \mathbb{Q} $ with $ 2N $
square-free, then $ W(E_{2N}) = -1. $ \end{lemma}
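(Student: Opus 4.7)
The plan is to decompose the global root number as a product of local root numbers,
$$W(E_{2N}) = W_\infty(E_{2N}) \cdot \prod_{p} W_p(E_{2N}),$$
and evaluate each local factor separately. At the archimedean place $W_\infty(E_{2N}) = -1$ holds for every elliptic curve over $\mathbb{R}$, so the task reduces to showing that the product of finite local root numbers is $+1$.

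From $c_4 = 96N$, $c_6 = 0$, and $\Delta = 2^9 N^3$ one sees that the primes of bad reduction are exactly $p = 2$ and the odd prime divisors of $N$; every remaining prime contributes $W_p = +1$. At an odd prime $p \mid N$ the model is already minimal (since $v_p(\Delta) = 3 < 12$) with $v_p(c_4) = 1$ and $v_p(c_6) = \infty$, so Tate's algorithm places $E_{2N}$ into Kodaira type III at $p$. For $p \ge 5$ this is governed by the standard formula for local root numbers in type III (Rohrlich, or Halberstadt's tables), giving $W_p$ as an explicit quadratic symbol in $p$; the exceptional prime $p = 3$ is handled by an analogous direct computation.

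The delicate step is at $p = 2$, where the curve has wildly ramified additive reduction with $v_2(\Delta) = 9$ and $v_2(c_4) = 5$ (still minimal because $v_2(\Delta) < 12$). I would run Tate's algorithm over $\mathbb{Z}_2$ to pin down the Kodaira type and then read off $W_2(E_{2N})$ from Halberstadt's tabulation of local root numbers at $2$, whose output depends only on the class of $N$ modulo $8$. One then verifies in each residue class that this factor exactly cancels the combined contribution coming from the odd bad primes, so the only surviving term in the global product is the archimedean $-1$.

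The main obstacle is the computation at $p = 2$: the Hensel-type arguments of the kind used in Lemmas~\ref{prop:mod1} and~\ref{prop:mod2} do not detect root numbers, and the wild ramification at $2$ forces the use of the explicit Halberstadt tables rather than a uniform formula. All of the residual dependence on $N \bmod 8$ is concentrated in that local factor, and the final reconciliation is a careful case-by-case check that couples $W_2$ with the Jacobi-symbol product from the odd primes.
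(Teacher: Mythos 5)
Your strategy---factoring $W(E_{2N})$ into local root numbers, taking $W_\infty=-1$, and computing the finite local factors via Tate's algorithm together with Rohrlich's formulas (for $p\ge 5$, type III here gives $e=4$ and $W_p=\left(\frac{-2}{p}\right)$) and Halberstadt's tables at $p=2,3$---is a legitimate route and genuinely different from the paper's. The paper simply quotes the closed-form global root number formula of Birch and Stephens for $y^2=x^3-dx$ with $d\not\equiv 0\pmod 4$, namely $W(E_d)=\mathrm{sgn}(-d)\cdot\epsilon(d)\cdot\prod_{p^2\| d,\,p\ge 3}\left(\frac{-1}{p}\right)$, and observes that for $d=2N$ even and square-free the last product is empty and $\epsilon(d)=1$ (since $d\equiv 2,6,10,14\pmod{16}$), so $W=\mathrm{sgn}(-2N)=-1$. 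That one-line derivation is exactly the collapse your local computation would have to reproduce by hand.

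The problem is that your proposal does not actually carry out the decisive computation. The statements ``I would run Tate's algorithm over $\mathbb{Z}_2$ \dots and read off $W_2$'' and ``One then verifies in each residue class that this factor exactly cancels the combined contribution coming from the odd bad primes'' defer precisely the content of the lemma: the value of $W_2(E_{2N})$ as a function of $N\bmod 8$, the separate treatment of $p=3$ when $3\mid N$, and the identity $W_2\cdot\prod_{p\mid N}\left(\frac{-2}{p}\right)=+1$. That last identity is a genuine multiplicative statement (the product $\prod_{p\mid N}\left(\frac{-2}{p}\right)=\left(\frac{-2}{N}\right)$ depends on $N\bmod 8$, and one must check it matches $W_2$ in each of the four odd residue classes); until it is written out, the proof is a plan rather than an argument. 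Either complete that case check, or do what the paper does and invoke the Birch--Stephens formula directly, which makes the whole lemma a two-line verification.
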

\begin{proof} By \cite{Birch}, for any integer $d$ such that $d\not\equiv0\mod 4$, the global root number of the elliptic curve
$E_d:y^2=x^3-dx$, has the following formula,
$$W(E_{d})=\text{sgn}(-d)\cdot \epsilon (d)\cdot \prod_{p^2||d,p\ge 3}\left( \frac{-1}{p}\right)$$
where
\begin{displaymath}
\epsilon(d)=\left\{\begin{array}{ll}
-1 &  \textrm{ if $d\equiv $ 1, 3, 11, 13(mod 16)}\\
1  &  \textrm{ if $d\equiv$ 2, 5, 6, 7, 9, 10, 14, 15( mod 16),}
\end{array}
\right.
\end{displaymath}
which tells us $W(E_{2N}) = -1. $ \end{proof}

\begin{cor}\label{corollary:rank} Assume the Parity conjecture, then for any $r\in A_D$ we have,
$$ r_{E_{2rD}} = 1\quad\text{ and }\quad  \text{TS}((E_{2rD}^{'}/\mathbb{Q} )[\widehat{\phi }] ) = 0. $$
\end{cor}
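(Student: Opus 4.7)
The plan is to sandwich $r_{E_{2rD}}$ between the upper bound of $1$ already produced in Section 2 and a matching lower bound of $1$ coming from the weak parity conjecture, and then read off the Tate--Shafarevich vanishing from the rank--Selmer identity.

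First, I would verify that for every $r\in A_D$ the integer $2rD$ is square-free. In each branch of the definition of $A_D$, the parameter $r$ is either a prime $l$ or a product $l_1l_2$ of two distinct primes, and the condition $\left(\frac{D}{l}\right)=-1$ (respectively $\left(\frac{D}{l_i}\right)=-1$) forces $l$ (respectively $l_1,l_2$) to be coprime to $D$, while the mod $8$ conditions place these primes in odd residue classes. Thus $2,p,q$ and the prime factors of $r$ are pairwise distinct, so Lemma~\ref{lemma:basic} applies and gives $W(E_{2rD})=-1$. The weak form of the parity conjecture (Conjecture~\ref{conj:main}) then yields $r_{E_{2rD}}\ge 1$. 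Combined with inequality~(\ref{inequality}), which asserts $r_{E_{2rD}}\le 1$ for $r\in A_D$, this forces $r_{E_{2rD}}=1$.

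For the Tate--Shafarevich statement, I would substitute $r_{E_{2rD}}=1$ and the dimensions from Proposition~\ref{them:main3} (namely $\dim_2 S^{(\phi)}(E_{2rD}/\mathbb{Q})=1$ and $\dim_2 S^{(\widehat{\phi})}(E_{2rD}'/\mathbb{Q})=2$) into the equality
$$r_{E_{2rD}}+\dim_{2}\text{TS}(E_{2rD}/\mathbb{Q})[\phi]+\dim_{2}\text{TS}(E_{2rD}'/\mathbb{Q})[\widehat{\phi}]=\dim_{2}S^{(\phi)}(E_{2rD}/\mathbb{Q})+\dim_{2}S^{(\widehat{\phi})}(E_{2rD}'/\mathbb{Q})-2$$
derived from the three short exact sequences just before the statement. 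The right-hand side equals $1+2-2=1$, which matches $r_{E_{2rD}}=1$ exactly, so the two non-negative summands on the left must both vanish; in particular $\text{TS}(E_{2rD}'/\mathbb{Q})[\widehat{\phi}]=0$.

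There is no genuine obstacle here: the corollary is a bookkeeping step that combines the Selmer computation of Section~2, the root-number calculation of Lemma~\ref{lemma:basic}, and the weak parity conjecture. The only point that requires a moment's care is the square-freeness of $2rD$ for every $r\in A_D$, which is immediate once one unpacks the Legendre-symbol and congruence conditions defining $A_D$.
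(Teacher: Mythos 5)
Your proposal is correct and follows exactly the paper's route: the paper's own proof is the one-line remark that inequality~(\ref{inequality}), Lemma~\ref{lemma:basic} and the parity conjecture imply the conclusion, and you have simply spelled out the sandwich argument and the substitution into the rank--Selmer identity in full detail. The extra check that $2rD$ is square-free for $r\in A_D$ is a sensible addition that the paper leaves implicit.
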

\begin{proof} Inequality \ref{inequality}, lemma \ref{lemma:basic} and the parity conjecture imply the conclusion.
 \end{proof}
Section 2, section 3 and the density theorem state that there are infinitely many integers $r$ such that $E_{2rD}$ have conjectural rank one. For these constructed elliptic curves $E_{2rD}$, Mordell-Weil theorem tells us that \[E_{2rD}(\mathbb{Q})\cong E_{2rD,\text{tors}}(\mathbb{Q})\oplus \mathbb{Z}.\]

And Proposition 6.1 in \cite{Silverman1} tells us that $E_{2rD,\text{tors}}(\mathbb{Q})$ is generated by the 2-order point $T=(0,0)$, that is, $E_{2rD,\text{tors}}(\mathbb{Q})=\{O,T\}$. In the next section, we study the arithmetic property and the canonical height of non-torsion points of $E_{2rD}(\mathbb{Q})$.

\section{the Property of non-torsion points}

For any finite place $v\in M_{\mathbb{Q}}$, let $\mathbb{Q}_{v}$ be the local field, $\mathscr{M}$ be the maximal ideal and $\mathbb{F}_{v}$ be the residue field of $\mathbb{Q}$ at $v$. We define
\[ \widetilde{E}_{\text{ns}}(\mathbb{F}_{v}):=\{P\in \mathbb{F}_{v}):\tilde{P}\ \text{is non-singular in}\ \widetilde{E}(\mathbb{F}_{v}) \}\]
\[ E_{0}(\mathbb{Q}_{v}):=\{P\in \mathbb{Q}_{v}:\tilde{P}\in \widetilde{E}_{\text{ns}}(\mathbb{F}_{v}) \}\]
\[ E_{1}(\mathbb{Q}_{v}):=\{P\in \mathbb{Q}_{v}:\tilde{P}= \tilde{O}\}\]
\[ \widehat{E}(\mathscr{M}):=\text{the formal group associated to}\ E.\]
\begin{lemma}\label{lemma:order} For any $r\in A_D$, let $T=(0,0)$ be the generator of $E_{2rD}(\mathbb{Q})_{\text{tors}}$ and $Q$ the generator of $ E_{2rD}(\mathbb{Q})/E_{2rD}(\mathbb{Q})_{\text{tors}}$, then for any odd prime $t|rD$, the following holds,
\begin{itemize}\item[(1)] if $ v_{t}( x(Q) ) \geq 1$, then for any nonzero integer $k$,
\[ \left \{
   \begin{array}{l}
   0 \geq v_{t}( x( [ 2k ]Q ) ) \equiv 0 (\bmod\ 2)  \\
  1 \leq v_{t}( x( [ 2k + 1 ]Q ) )  \equiv 1 (\bmod\ 2) \\
  1 \leq v_{t}( x( [ 2k ]Q + T ) ) \equiv 1 (\bmod\ 2)\\
  -2 \geq  v_{t}( x( [ 2k + 1 ]Q + T ) ) \equiv 0 (\bmod\ 2).
    \end{array}
  \right.\]
\item[(2)] if $ v_{t}( x(Q) ) \leq 0$, then for any nonzero integer $k$,
\[  \left \{
   \begin{array}{l}
   0 \geq v_{t}( x( [ 2k ]Q ) ) \equiv 0 (\bmod\ 2)   \\
  0 \geq v_{t}( x( [ 2k + 1 ]Q ) )  \equiv 0 (\bmod\ 2) \\
 1 \leq v_{t}( x( [ 2k ]Q + T ) ) \equiv 1 (\bmod\ 2) \\
  1 \leq  v_{t}( x( [ 2k + 1 ]Q + T ) ) \equiv 1 (\bmod\ 2).
    \end{array}
  \right.    \] \end{itemize}
\end{lemma}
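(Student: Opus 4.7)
The plan is to combine an integrality dichotomy for $v_t(x(P))$ with the explicit translation-by-$T$ formula and a descent-style homomorphism tracking $v_t(x(\cdot)) \bmod 2$.

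First I would write any non-torsion $P \in E_{2rD}(\mathbb{Q})$ in minimal form $x(P) = a/b^{2}$, $y(P) = c/b^{3}$ with $\gcd(a,b) = 1$, so that $c^{2} = a(a^{2} - 2rDb^{4})$. Since $t$ is odd, $t \mid rD$, and $2rD$ is squarefree, $t \parallel 2rD$; combined with $\gcd(a,b)=1$, at most one of $v_{t}(a), v_{t}(b)$ is positive. If $v_{t}(b) \geq 1$ then $v_{t}(x(P)) = -2v_{t}(b)$ is a non-positive even integer; if $v_{t}(a) \geq 1$ then $v_{t}(a^{2} - 2rDb^{4}) = 1$, so $v_{t}(c^{2}) = v_{t}(a) + 1$ forces $v_{t}(a) = v_{t}(x(P))$ to be a positive odd integer. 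This produces the parity/sign dichotomy that underlies every line of the lemma.

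Next, the chord through $P$ and $T = (0,0)$ gives the explicit formula $x(P + T) = -2rD/x(P)$, whence $v_{t}(x(P + T)) = 1 - v_{t}(x(P))$: translation by $T$ flips the parity of $v_{t}(x)$ and carries one branch of the dichotomy into the other, linking the $[k]Q$-rows to the $[k]Q + T$-rows. To decide which branch $v_{t}(x([k]Q))$ falls into for each $k$, I would invoke the $\phi$-descent map $\delta: E_{2rD}(\mathbb{Q}) \to \mathbb{Q}^{*}/\mathbb{Q}^{*2}$ given by $\delta(P) = x(P)$ (with $\delta(O) = 1$, $\delta(T) = -2rD$). Its multiplicativity $\delta(P_{1} + P_{2}) = \delta(P_{1})\delta(P_{2})$ follows from the chord--tangent construction, since $x(P_{1}), x(P_{2}), x(P_{1}+P_{2})$ arise as the three roots of a cubic whose constant term is the square of the $y$-intercept of the chord. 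Composing $\delta$ with $v_{t} \bmod 2$ yields a homomorphism $\chi_{t}: E_{2rD}(\mathbb{Q}) \to \mathbb{F}_{2}$ with $\chi_{t}(T) = 1$. In case (1) the hypothesis forces $\chi_{t}(Q) = 1$, so $\chi_{t}([k]Q) \equiv k$ and $\chi_{t}([k]Q + T) \equiv k+1 \pmod{2}$; combined with Step 1 this determines the correct branch for each of the four rows, and case (2) is dual.

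The main technical obstacle I anticipate is the sharpened bound $v_{t}(x([2k+1]Q + T)) \leq -2$ in the last line of case (1), which via the translation formula is equivalent to $v_{t}(x([2k+1]Q)) \geq 3$ whenever $k \neq 0$. The homomorphism $\chi_{t}$ and the dichotomy only deliver $\geq 1$, so this requires a finer iterative argument: I would combine the duplication identity $v_{t}(x(2P)) = 1 - v_{t}(x(P))$ (derived directly from $x(2P) = (x^{2}+2rD)^{2}/(4y^{2})$ in the case $v_{t}(x(P)) \geq 1$) with the chord identity $v_{t}(x(P_{1}+P_{2})) = 2v_{t}(\nu) - v_{t}(x(P_{1})) - v_{t}(x(P_{2}))$, and induct on $|k|$ to show that the positive odd valuations in the sequence $\{v_{t}(x([2k+1]Q))\}_{k \neq 0}$ amplify to at least $3$.
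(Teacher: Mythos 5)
Your argument is correct for all of the parity and sign assertions, and it takes a genuinely different route from the paper. The paper works locally at $t$: Tate's algorithm gives Tamagawa number $c_t=2$, so $E_{2rD}(\mathbb{Q}_t)/E_{2rD,0}(\mathbb{Q}_t)\cong\mathbb{Z}/2\mathbb{Z}$, and the valuations of $x([k]Q)$ are then computed case by case (according to whether $v_t(x(Q))\ge 3$, $=1$, $=0$, or $\le -2$) via the duplication formula and the formal group, yielding exact expressions such as $v_t(x([2k]Q))=-2v_t(2k)-(a-1)$. Your integrality dichotomy (from $c^2=a(a^2-2rDb^4)$ with $t\parallel 2rD$, forcing $v_t(x(P))$ to be either nonpositive even or positive odd) combined with the global descent homomorphism $\delta(P)=x(P)\bmod \mathbb{Q}^{*2}$ replaces all of that: $\chi_t=v_t\circ\delta\bmod 2$ is a homomorphism with $\chi_t(T)=1$, and together with $x(P+T)=-2rD/x(P)$ it pins down, for each row of the lemma, both the parity of the valuation and (via the dichotomy) its sign, with no case analysis. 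This is more elementary and cleaner; what it does not recover is the paper's precise valuation formulas in terms of $v_t(k)$, which are not needed elsewhere.

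The one place you should not invest further effort is the ``main technical obstacle'' you flag, namely the bound $v_t(x([2k+1]Q+T))\le -2$, equivalently $v_t(x([2k+1]Q))\ge 3$. That refinement is false in general, and the paper's own proof exhibits the failure: in its Case II ($v_t(x(Q))=1$) the displayed formulas give $v_t(x([2k+1]Q))=1$ and $v_t(x([2k+1]Q+T))=0$ whenever $t\not|\,(2k+1)$, contradicting the ``$-2\ge$'' line that nevertheless appears in the summary box and in the statement of Lemma \ref{lemma:order}. So the induction you propose to amplify the odd positive valuations to at least $3$ cannot succeed. The correct bound is $v_t(x([2k+1]Q+T))\le 0$, which your homomorphism argument already delivers, and which is all that is used downstream: Corollary \ref{corollary:order} and Theorem \ref{theorem:main1} only need the dichotomy ``$\ge 1$ odd'' versus ``$\le 0$ even'' at the primes $p$ and $q$.
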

\begin{proof}\begin{itemize} \item[ Case I.] For $ v_{t}( x(Q) ) = a \geq 3 $ (note that $ a $ is odd) :  Using Tate's algorithm in \cite{Silverman2},
we know the Tamagawa number $ c_{t}$ is equal to 2, which means
$ E_{2rD}( \mathbb{Q}_{t} )/E_{2rD, 0}( \mathbb{Q}_{t} ) \cong \mathbb{Z}/2\mathbb{Z}. $ Since $ Q \notin E_{2rD, 0}( \mathbb{Q}_{t} ),$ we have
$$ [ 2k ]Q \in E_{2rD, 0}( \mathbb{Q}_{t} ), [ 2k + 1 ]Q  \in E_{2rD}( \mathbb{Q}_{t} ) \setminus E_{2rD, 0}( \mathbb{Q}_{t} ). $$
By the duplication formula,
$$ x( [2]Q ) = \frac{x(Q)^{4} + 4rDx(Q)^{2} + 4(rD)^{2}}{4x(Q)^{3} -8rDx(Q)} $$
so $ v_{t}( x( [ 2 ]Q ) )= - ( a - 1 ) < 0, $ hence $ [ 2 ]Q \in E_{2rD, 1}( \mathbb{Q}_{t} )$.
 The isomorphism $ E_{2rD, 1}( \mathbb{Q}_{t} ) \cong \widehat{ E}_{2rD}(\mathscr{M}), ( x, y )
 \mapsto (z(x)=-\frac{x}{y}, w(z))$ together with  Proposition 2.3 in \cite{Silverman1}
implies that $ v_{t}( z([2k]Q) )=v_{t}( 2k ) + \frac{a-1}{2}$. So
\[  v_{t}( x( [ 2k ]Q ) ) = -2v_{t}( 2k ) - ( a - 1 ), \]
hence
$ v_{t}( x( [ 2 ( 2k + 1 ) ]Q ) )= -2 v_{t}( 2k + 1 ) - ( a - 1)$,
again the duplication formula gives the equality
$ v_{t}( x( [ 2 ( 2k + 1 ) ]Q ) )=1- v_{t}( x( [ 2k + 1 ]Q ) )$,
thus $$ v_{t}( x( [ 2k + 1 ]Q ) ) = 2 v_{t}( 2k + 1 ) + a. $$

On the other hand, by the chord and tangent principal, we have
$$ x( R )\cdot x( R + T ) = -2rD, R \in E_{2rD}( \mathbb{Q}  ) \setminus \{ O, T \} $$
so $$ v_{t}( x( [ 2k ]Q + T ) ) = 2v_{t}( 2k ) +  a, $$
$$ v_{t}( x( [ 2k + 1 ]Q + T ) ) = -2v_{t}( 2k + 1 ) -  ( a - 1 ). $$
By the argument above, we have
$$   \left \{
   \begin{array}{l}
   -2 \geq v_{t}( x( [ 2k ]Q ) ) \equiv 0 (\bmod\ 2)  \\
  3 \leq v_{t}( x( [ 2k + 1 ]Q ) )  \equiv 1 (\bmod\ 2) \\
  3 \leq v_{t}( x( [ 2k ]Q + T ) ) \equiv 1 (\bmod\ 2) \\
  -2 \geq  v_{t}( x( [ 2k + 1 ]Q + T ) ) \equiv 0 (\bmod\ 2).
    \end{array}
  \right.    $$

\item[ Case II.] For $v_{t}( x(Q) ) =1: $ the duplication formula gives $ v_{t}( x( [ 2 ]Q ) ) = 0, $ so
$ \widetilde{E}_{ns}(\mathbb{F}_{t}) = < \widetilde{[2]Q} >, $ where $ \widetilde{[ 2 ]Q} $ is the reduction of $ [2 ]Q $
at the place $ t$. Since $ | \widetilde{E}_{ns}(\mathbb{F}_{t}) | = t$, hence $[t]( \widetilde{[2]Q} ) = O$.
Let $ b:= -v_{t}( x( [ 2t ]Q )$, then $ b$ is an even integer bigger than one. Similarly, we have
$ v_{t}( x( [ t ]Q ) = 1 + b. $ Since $ \widetilde{E}_{ns}(\mathbb{F}_{t}) = < \widetilde{[2]Q} >$, for any integer $k$ not divided by $t$, the valuation $ v_{t}( x( [ k ]( [ 2 ]Q ) )$ is equal to zero.
While $ t|k, $ similarly to the Case I, we have
 $$ v_{t}( x( [2k ]Q ) = -2 v_{t}( 2k ) - ( b - 2 ). $$
Thus for any non-zero integer $ k, $ we have
$$ v_{t}( x( [2k ]Q ) =  \left \{
   \begin{array}{ll}
  0  & \hbox { if }\quad t \not| k \\
  -2 v_{t}( 2k ) - ( b - 2 )& \hbox { if }\quad t | k,
  \end{array}
  \right.$$
$$
 v_{t}( x( [2k + 1 ]Q ) =  \left \{
   \begin{array}{ll}
  1 & \hbox { if }\quad t \not| 2k + 1 \\
  2 v_{t}( 2k + 1 ) + ( b - 1 )  &\hbox { if }\quad t | 2k + 1,
  \end{array}
  \right.    $$
$$ v_{t}( x( [2k ]Q + T ) =  \left \{
   \begin{array}{ll}
  1 &\hbox{ if }\quad  t \not| k \\
  2 v_{t}( 2k ) + ( b - 1 ) &\hbox{ if }\quad t | k,
  \end{array}
  \right.$$
and
$$ v_{t}( x( [2k + 1 ]Q + T ) =  \left \{
   \begin{array}{ll}
  0 &\hbox{ if }\quad  t \not| 2k + 1\\
  -2v_{t}( 2k + 1) - ( b - 2 ) &\hbox{ if }\quad t | 2k + 1.
  \end{array}
  \right.    $$
so
 $$   \left \{
   \begin{array}{l}
   0 \geq v_{t}( x( [ 2k ]Q ) ) \equiv 0 (\bmod\ 2)   \\
  1 \leq v_{t}( x( [ 2k + 1 ]Q ) )  \equiv 1 (\bmod\ 2) \\
  1 \leq v_{t}( x( [ 2k ]Q + T ) ) \equiv 1 (\bmod\ 2) \\
  -2 \geq  v_{t}( x( [ 2k + 1 ]Q + T ) ) \equiv 0 (\bmod\ 2).
    \end{array}
  \right.    $$
From Case I and Case II, we know that if $ v_{t}( x(Q) ) \geq 1, $ then
\begin{equation}\label{equation:basic1}  \left \{
   \begin{array}{l}
   0 \geq v_{t}( x( [ 2k ]Q ) ) \equiv 0 (\bmod\ 2)  \\
  1 \leq v_{t}( x( [ 2k + 1 ]Q ) )  \equiv 1 (\bmod\ 2) \\
  1 \leq v_{t}( x( [ 2k ]Q + T ) ) \equiv 1 (\bmod\ 2)\\
  -2 \geq  v_{t}( x( [ 2k + 1 ]Q + T ) ) \equiv 0 (\bmod\ 2).
    \end{array}
  \right.   \end{equation}

\item[ Case III.] For $ v_{t}( x(Q) ) = 0: $ The nonsingular part of the reduction curve is generated by one point i.e. $ \widetilde{E}_{2rD,ns}(\mathbb{F}_{t}) = < \widetilde{Q} >, $ where $ \widetilde{Q} $
is the reduction of $ Q $. Similarly to the discussion in the Case II,
for any non-zero integer $ k$, we have
 $$   \left \{
   \begin{array}{l}
   0 \geq v_{t}( x( [ 2k ]Q ) ) \equiv 0 (\bmod\ 2)   \\
  0 \geq v_{t}( x( [ 2k + 1 ]Q ) )  \equiv 0 (\bmod\ 2) \\
  1 \leq v_{t}( x( [ 2k ]Q + T ) ) \equiv 1 (\bmod\ 2) \\
  1 \leq  v_{t}( x( [ 2k + 1 ]Q + T ) ) \equiv 1 (\bmod\ 2).
    \end{array}
  \right.    $$
\item[ Case IV.] For $ v_{t}( x(Q) ) \leq -2: $  Similarly to Case I. we have
$$   \left \{
   \begin{array}{l}
   -2 \geq v_{t}( x( [ 2k ]Q ) ) \equiv 0 (\bmod\ 2)   \\
  -2 \geq v_{t}( x( [ 2k + 1 ]Q ) )  \equiv 0 (\bmod\ 2) \\
 3 \leq v_{t}( x( [ 2k ]Q + T ) ) \equiv 1 (\bmod\ 2) \\
  3 \leq  v_{t}( x( [ 2k + 1 ]Q + T ) ) \equiv 1 (\bmod\ 2).
    \end{array}
  \right.    $$
From Case III and Case IV, we know that if $ v_{t}( x(Q) ) \leq 0, $ then
\begin{equation}\label{equation:basic2}  \left \{
   \begin{array}{l}
   0 \geq v_{t}( x( [ 2k ]Q ) ) \equiv 0 (\bmod\ 2)   \\
  0 \geq v_{t}( x( [ 2k + 1 ]Q ) )  \equiv 0 (\bmod\ 2) \\
 1 \leq v_{t}( x( [ 2k ]Q + T ) ) \equiv 1 (\bmod\ 2) \\
  1 \leq  v_{t}( x( [ 2k + 1 ]Q + T ) ) \equiv 1 (\bmod\ 2).
    \end{array}
  \right.    \end{equation}
\end{itemize}
\end{proof}
Using lemma \ref{lemma:order}, one can obtain the following observation.
\begin{cor}\label{corollary:order} Assume the Parity conjecture, then for any $r\in A_D$,
\[ v_p(x([k]Q))\not=v_q(x([k]Q)),\ \ \text{for any odd integer}\ k,\]
where $Q$ is the generator of $E_{2rD}(\mathbb{Q})/E_{2rD}(\mathbb{Q})_{\textrm{tors}}$.
\end{cor}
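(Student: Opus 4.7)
The strategy is to reduce the claim, via Lemma~\ref{lemma:order}, to a parity statement on $v_p(x(Q))$ and $v_q(x(Q))$, and then establish that statement by identifying the image of $Q$ under the $\hat{\phi}$-descent map in the explicit Selmer group of Proposition~\ref{them:main3}. To carry out the reduction, fix an odd integer $k = 2j+1$ and apply Lemma~\ref{lemma:order} at $t \in \{p, q\}$: the lemma asserts that $v_t(x([k]Q))$ is odd and positive when $v_t(x(Q)) \geq 1$, and even and non-positive when $v_t(x(Q)) \leq 0$. A short check on the Weierstrass equation $y^2 = x(x^2 - 2rD)$ shows that for $t \mid rD$ the integer $v_t(x(Q))$ can never equal $-1$ or a positive even integer, so the dichotomy ``$v_t(x(Q)) \geq 1$ vs.\ $v_t(x(Q)) \leq 0$'' coincides with the parity dichotomy on $v_t(x(Q))$. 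It therefore suffices to show that $v_p(x(Q))$ and $v_q(x(Q))$ have different parities, since this forces $v_p(x([k]Q))$ and $v_q(x([k]Q))$ to have different parities and, a fortiori, to be unequal.

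For the parity statement, I would invoke the $\hat{\phi}$-descent embedding
\[ \delta : E_{2rD}(\mathbb{Q})/\hat{\phi}(E_{2rD}'(\mathbb{Q})) \hookrightarrow S^{(\hat{\phi})}(E_{2rD}'/\mathbb{Q}), \qquad (x_P, y_P) \mapsto x_P \cdot \mathbb{Q}^{*2}, \]
with the standard convention $\delta(T) = -2rD \cdot \mathbb{Q}^{*2}$ for the rational 2-torsion point $T = (0,0)$. By Corollary~\ref{corollary:rank} the Tate--Shafarevich term $\text{TS}(E_{2rD}'/\mathbb{Q})[\hat{\phi}]$ vanishes, so $\delta$ is an isomorphism onto $S^{(\hat{\phi})} \cong (\mathbb{Z}/2\mathbb{Z})^{2}$. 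A comparison of indices, using that the rank of $E_{2rD}(\mathbb{Q})$ is one, gives $\hat{\phi}(E_{2rD}'(\mathbb{Q})) = 2 E_{2rD}(\mathbb{Q})$, so the three images $\delta(Q), \delta(T), \delta(Q+T)$ exhaust the non-identity classes in $S^{(\hat{\phi})}$; in particular $\delta(Q)$ is one of the two non-trivial classes distinct from $-2rD$.

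The concluding step is a direct inspection of the explicit Selmer representatives under each of the five branches defining $A_D$. In every branch, the two non-trivial elements of $S^{(\hat{\phi})}$ other than $-2rD$ take the form of square-free integers in which exactly one of the primes $p, q$ appears, so $v_p$ and $v_q$ of any such representative have different parities. Applied to $\delta(Q)$, this yields the desired parity statement, and the reduction from the first paragraph then completes the proof. The main obstacle is this case-by-case verification: Proposition~\ref{them:main3} works out only the branch $r = l_1 l_2$ in detail, so one has to rerun the application of Lemma~\ref{prop:mod2} in each of the remaining four branches of $A_D$ to confirm that $p$ and $q$ are consistently separated between the two non-$(-2rD)$ classes. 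This is routine but the only step where something could in principle fail.
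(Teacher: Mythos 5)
Your proposal is correct and follows essentially the same route as the paper's: both reduce, via Lemma \ref{lemma:order}, to showing that exactly one of $v_p(x(Q))$, $v_q(x(Q))$ is positive (equivalently, that they have opposite parities), and both deduce this from $\text{TS}(E_{2rD}'/\mathbb{Q})[\widehat{\phi}]=0$ together with the explicit description of $S^{(\widehat{\phi})}(E_{2rD}'/\mathbb{Q})$, whose two nontrivial classes other than $-2rD$ each involve exactly one of $p,q$ --- a case-by-case check that the paper, like you, carries out only for one representative branch. The only real difference is in the middle step: you pin down $\delta(Q)$ directly via the index count $\widehat{\phi}(E_{2rD}'(\mathbb{Q}))=2E_{2rD}(\mathbb{Q})$, whereas the paper lifts the two relevant Selmer classes to explicit rational points $R_1,R_2$ on the homogeneous spaces $C_d'$ and then invokes Lemma \ref{lemma:order} to transfer their valuation pattern to $Q$; your version makes that (somewhat implicit) step of the paper cleaner, but it is the same underlying argument.
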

\begin{proof}We first claim that, there exists two points $R_1,R_2\in E_{2rD}(\mathbb Q)$
such that
\begin{equation}\label{equation:basic3}
v_{p}( x( R_{i} )) + 1 \equiv v_{q}( x( R_{i} )) (\bmod\ 2) \quad\text{ and }\quad
 v_{p}( x( R_{i} ))\cdot v_{q}( x( R_{i})) \leq 0,\ \text{for}\ i=1,2.
 \end{equation}
We take the case $( p, q ) \equiv ( 5, 5 ) \mod 8 $ and
$( \frac{q}{p} ) = 1$  for example. The proof for other cases are similar.

In this case, the prime $ l$ is in the class $a_{\overline q}=\overline 3$ and $( \frac{D}{l} ) = -1$.
For the elliptic curve $ E_{2lD} $, we have $\text{TS}(E_{2lD}^{'}/\mathbb{Q} )[\widehat{\phi }] ) = 0 $ and
$ S^{(\widehat{\phi} )}(E_{2lD}^{'}/ \mathbb{Q}) = \{ 1, -q, 2pl, -2Dl \}$. Thus
$ C_{-q}^{'}( \mathbb{Q} ) \neq \emptyset, C_{2pl}^{'}( \mathbb{Q} ) \neq \emptyset. $  Take
$ ( z_{1}, w_{1} ) \in C_{2pl}^{'}( \mathbb{Q} )$ and $( z_{2}, w_{2} ) \in C_{-q}^{'}( \mathbb{Q} )$.
One can see $ v_{p}( z_{1} ) \leq 0, v_{q}( z_{1} ) \geq 0 $ and $ v_{p}( z_{2} ) \geq 0, v_{q}( z_{2} ) \leq 0. $
Denote the images of $(z_1,w_1)$ and $(z_2,w_2)$ under the following isomorphism
    $$ C_{d}^{'} \longrightarrow E_{2rD}, \quad (z, w ) \longmapsto
( \frac{d}{z^{2}}, \frac{dw}{z^{3}} ), $$
by $R_1$ and $R_2$ respectively. So $ R_{1} := ( \frac{2pl}{z_{1}^{2}}, \frac{2plw_{1}}{z_{1}^{3}} ) $
and $ R_{2}: = ( \frac{-q}{z_{2}^{2}}, \frac{-qw_{2}}{z_{2}^{3}} )$.
Thus $R_1,R_2$ satisfy the relations (\ref{equation:basic3}).

Now, by Lemma \ref{lemma:order}, we have $ v_{p}( x(Q) ) \geq 1, v_{q}( x(Q)
)\leq 0 $ or $ v_{p}( x(Q) ) \leq 0, v_{q}( x(Q) ) \geq 1$. Hence, for any $ k \in \mathbb{Z}$,  (\ref{equation:basic1}) and (\ref{equation:basic2}) give
 $$ v_{p}( x([ 2k + 1 ]Q) ) \neq v_{q}( x([ 2k + 1 ]Q) ).$$

\end{proof}

\begin{remark} Corollary \ref{corollary:order} implies that, given the generator $Q$ of $E_{2rD}(\mathbb{Q})/E_{2rD}(\mathbb{Q})_{\textrm{tors}}$, one can recover the prime divisor $p$ from $x([k]Q)$ and $D$. The complexity of searching rational points of elliptic curves over $\mathbb{Q}$ depends on the height of the rational points. For the elliptic curve $E_{2rD}$, the canonical height of rational points is known as follows,\end{remark}
\begin{prop}\label{prop:height} For any $r\in A_D$, let  $ P \in E_{2rD}(\mathbb{Q}) $ be a non-torsion rational point, and write the $x-$ coordinate of $P$ as $x = \frac{a}{d^2}$. Then,
\begin{equation}\label{equation:height1} \widehat{h}(P) \geq \frac{1}{16}\log(4rD), \end{equation}
\begin{equation}\label{equation:height2} \frac{1}{4}\log(\frac{a^2+2rDd^4}{2rD})\leq \widehat{h}(P) \leq \frac{1}{4}\log(a^2+2rDd^4)+\frac{1}{12}\log2. \end{equation}
\end{prop}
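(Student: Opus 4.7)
The plan is to first establish the explicit two-sided comparison $(4.2)$ between the canonical height $\widehat{h}$ and a carefully chosen naive height, and then derive $(4.1)$ from $(4.2)$ applied to $[2]P$, invoking the $2$-descent structure of $E_{2rD}$.

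For $(4.2)$, I would introduce the auxiliary function
\[
\psi(P) := \tfrac{1}{4}\log\bigl(a^{2} + 2rDd^{4}\bigr),
\]
where $x(P) = a/d^{2}$ is in lowest terms, and show that $\widehat{h}(P)$ differs from $\psi(P)$ by an explicitly bounded amount. Starting from the duplication formula
\[
x([2]P) = \frac{(x(P)^{2}+2rD)^{2}}{4x(P)(x(P)^{2}-2rD)},
\]
substituting $x(P) = a/d^{2}$ and cancelling common factors (using $\gcd(a,d)=1$ together with the square-freeness of $2rD$), I would obtain explicit formulas for the coprime integers $a',d'$ with $x([2]P) = a'/d'^{2}$. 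A short case analysis on these common factors shows that $\psi([2]P) - 4\psi(P)$ lies in a bounded interval whose endpoints are controlled by $\log 2$ and $\log(2rD)$. Substituting into the telescoping identity
\[
\widehat{h}(P) = \psi(P) + \sum_{n=0}^{\infty} 4^{-n-1}\bigl(\psi([2^{n+1}]P) - 4\psi([2^{n}]P)\bigr)
\]
and summing the geometric series yields both inequalities in $(4.2)$; the constant $\tfrac{1}{12}\log 2$ on the upper side arises as $\tfrac{1}{4}\cdot\tfrac{1}{3}\log 2$ from the sum $\sum_{n\ge 1}4^{-n} = \tfrac{1}{3}$.

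For $(4.1)$, I would apply $(4.2)$ to $[2]P$ instead of $P$. Since $P$ is non-torsion, $[2]P \notin \{O,T\}$; moreover $[2]P$ lies in the image of the dual isogeny $\widehat{\phi}: E_{2rD}' \to E_{2rD}$, so its $x$-coordinate is a rational square. Writing $x([2]P) = s^{2}/t^{2}$ in lowest terms and substituting into $y^{2} = x^{3} - 2rDx$ forces the quartic descent equation
\[
m^{2} = s^{4} - 2rDt^{4}, \qquad \gcd(s,t)=1,\ s\,t\,m \neq 0.
\]
From $m^{2} \ge 1$ and $t \ge 1$ one deduces $s^{4} \ge 2rDt^{4}+1 \ge 2rD+1$. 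Combining this with the lower half of $(4.2)$ applied to $[2]P$ gives
\[
\widehat{h}(P) = \tfrac{1}{4}\widehat{h}([2]P) \ge \tfrac{1}{16}\log\!\Bigl(\tfrac{s^{4}+2rDt^{4}}{2rD}\Bigr),
\]
and a suitable sharpening of the elementary estimate on $s^{4}+2rDt^{4}$ then produces $\widehat{h}(P) \ge \tfrac{1}{16}\log(4rD)$.

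The main obstacle will be the delicate bookkeeping of common factors in the duplication formula, which is needed for the sharp constants in $(4.2)$; a naive AM--GM step on the descent equation in $(4.1)$ gives only $\tfrac{1}{16}\log 2$, so to reach $\tfrac{1}{16}\log(4rD)$ one must exploit either the sharpness of $s^{2} \ge \sqrt{2rD}$ together with $t \ge 1$ in a more refined way, or iterate the descent one further step to control $x([4]P)$. Balancing these elementary but delicate estimates is the principal technical hurdle.
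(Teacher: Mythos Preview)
Your telescoping approach to $(4.2)$ is a legitimate alternative to the paper's method, which instead decomposes $\widehat h$ into local heights: an archimedean estimate $0\le \widehat h_\infty(P)-\tfrac14\log(x(P)^2+2rD)+\tfrac1{12}\log|\Delta|\le\tfrac1{12}\log 2$ (from Tate's series, as in Bremner--Silverman--Tzanakis) together with the explicit non-archimedean formula
\[
\widehat h_t(P)=\tfrac12\max\{0,-\mathrm{ord}_t x(P)\}\log t+\tfrac1{12}\mathrm{ord}_t(\Delta)\log t-\begin{cases}0,&P\in E_{2rD,0}(\mathbb Q_t),\\ \tfrac14\mathrm{ord}_t(2rD)\log t,&P\notin E_{2rD,0}(\mathbb Q_t).\end{cases}
\]
Summing over all places gives $(4.2)$ directly; the $-\tfrac14\log(2rD)$ in the lower bound is precisely the worst-case sum of the correction terms at the bad primes. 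Your global duplication bookkeeping should reproduce these bounds, but the local-height packaging makes the source of each constant transparent.

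For $(4.1)$ there is a genuine gap. You correctly observe that applying the lower half of $(4.2)$ to $[2]P$ and using $s^4\ge 2rDt^4$ yields only
\[
\widehat h(P)\ge\tfrac1{16}\log\frac{s^4+2rDt^4}{2rD}\ge\tfrac1{16}\log\frac{4rDt^4}{2rD}=\tfrac1{16}\log(2t^4)\ge\tfrac1{16}\log 2,
\]
and neither a sharper elementary manipulation nor iterating to $[4]P$ will recover the missing factor $2rD$: the denominator $2rD$ persists in $(4.2)$ at every stage, and further division by powers of $4$ only hurts. The point you are missing is that the lower bound in $(4.2)$ is \emph{not sharp} for $[2]P$. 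By Lemma~\ref{lemma:order} (or simply because every Tamagawa number $c_t$ divides $2$), the doubled point $[2]P$ lies in $E_{2rD,0}(\mathbb Q_v)$ at \emph{every} place $v$, finite and infinite. Hence all the correction terms $\tfrac14\mathrm{ord}_t(2rD)\log t$ vanish for $[2]P$, and the local-height sum gives the stronger inequality
\[
\widehat h([2]P)\ \ge\ \tfrac14\log\bigl(\alpha^2+2rD\delta^4\bigr)
\]
with no $2rD$ in the denominator, where $x([2]P)=\alpha/\delta^2$. Your descent inequality $\alpha^2\ge 2rD\delta^4$ (equivalently, $[2]P$ on the real identity component) then gives $\alpha^2+2rD\delta^4\ge 4rD$, and dividing by $4$ yields $(4.1)$. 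In your telescoping language, this amounts to starting the sum at $[2]P$ rather than $P$ and noting that the common-factor case analysis for $\psi([2]Q)-4\psi(Q)$ improves once $Q$ itself is already a double; without isolating that improvement you cannot reach $\tfrac1{16}\log(4rD)$.
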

\begin{proof} We first estimate the archimedean contribution $\widehat{h}_\infty$ to the canonical height by using Tate's series. Arguing as Proposition 2.1 in \cite{Bremner} we have
\begin{equation}\label{equation:basic4} 0 \leq \widehat{h}_\infty(P)-\frac{1}{4}\log(x(P)^2+2rD)+\frac{1}{12}\log|\Delta| \leq \frac{1}{12}\log2.
\end{equation}
Let $P_1=2P$, from the Lemma \ref{lemma:order}, it follows that $P_1 \in E_{2rD,0}(\mathbb{Q}_t)$ for any rational prime $t$, the local height of $P_1$ at $t$ is given by the formula Theorem 4.1 in \cite{Silverman2}

\[ \widehat{h}_t(P)=\frac{1}{2}\max\{0,-\text{ord}_{t}(x(P_1))\log t\} \]
\makeatletter
\makeatother
\begin{subnumcases}{+\frac{1}{12}\text{ord}_{t}(\Delta)\log t-}
0, &$ P \in E_{2rD,0}(\mathbb{Q}_t)$, \\
\frac{1}{4}\text{ord}_{t}(2rD)\log t, &$P \notin E_{2rD,0}(\mathbb{Q}_t)$.
\end{subnumcases}
\makeatletter\let\@alph\@@@alph\makeatother

\[ \widehat{h}_t(P_1)=\frac{1}{2}\max\{0,-\text{ord}_{t}(x(P_1))\log t\}+\frac{1}{12}\text{ord}_{t}(\Delta)\log t. \] Writing $x(P_1)=\frac{\alpha}{\delta^2}$, and combining (\ref{equation:basic4}) we obtain:
\[\widehat{h}(P_1)\geq \frac{1}{4}\log(\alpha^2+2rD\delta^4)\]
Since $P_1 \in E_{2rD,0}(\mathbb{R})$, then $\alpha/\delta^2 \geq \sqrt{2rD}$, so in fact $\alpha^2+2rD\delta^4 \geq 4rD\delta^4 \geq 4rD$. This gives the lower bound \[\widehat{h}(P_1)\geq \frac{1}{4}\log(4rD)\] and then the formula $\widehat{h}(P_1)=\widehat{h}(2P)=4\widehat{h}(P)$ gives the desired inequality as Proposition \ref{prop:height}.

In order to prove the upper and lower bounds, rewrite the inequalities (\ref{equation:basic4}), (4.7a) and (4.7b) respectively, as
\[ 0 \leq \widehat{h}_\infty(P)-\frac{1}{4}\log(x(P)^2+2rD)+\frac{1}{12}\log|\Delta| \leq \frac{1}{12}\log2  \]
\[ -\frac{1}{4}\text{ord}_{t}(2rD)\log t \leq \widehat{h}_t(P)-\text{ord}_{t}(d)\log t-\frac{1}{12}\text{ord}_{t}(\Delta)\log t \leq 0  (t\geq2)\]
Adding these estimates over all places yields
\[ -\frac{1}{4}\log (2rD) \leq \widehat{h}(P)-\frac{1}{4}\log(a^2+2rDd^4)\leq \frac{1}{12}\log 2.\]
Therefore the required inequality follows from the inequality above.

\end{proof}
\begin{remark} The lower bound (\ref{equation:height1}) is a special case of a conjecture of Serge Lang \cite{Lang}, which says that the canonical height of a non-torsion point on elliptic curve should satisfy \[\widehat{h}(P)>> \log|\Delta|,\] where $\Delta=2^{9}(rD)^{3}$ is the discriminant of $E_{2rD}$. \end{remark}
\begin{remark} Using inequality (\ref{equation:height2}), one can obtain the difference between the canonical height and the Weil height such as \[ -\frac{1}{4}\log(4rD)\leq \widehat{h}(P)-\frac{1}{2}h(P) \leq \frac{1}{4}\log(2rD+1)+\frac{1}{12}\log 2, \]
where $h(P)$ is the Weil height of $P$ defined as $h(P)=\frac{1}{2}\log\max\{|a|,|d^{2}|\}$. Thus if one can give an efficient and practical algorithm for an upper bound estimate of the canonical height, then by Zagier's Theorem 1.1 in \cite{Siksek} one can find the generator of $E_{2rD}$.  \end{remark}

\section{the proof of the main results}

Now let's come to prove our main results theorem \ref{theorem:main1} and Corollary \ref{corollary:basic}.

\begin{proof}[{\bf Proof of theorem \ref{theorem:main1}}] The definition of $A_{D}$, Corollary \ref{corollary:rank}, Corollary \ref{corollary:order} and the density theorem state that there are infinitely many integers $r$ such that $E_{2rD}$ has conjectural rank one and \[ v_p(x([k]Q))\not=v_q(x([k]Q)),\ \ \text{for any odd integer}\ k,\]
where $Q$ is the generator of $E_{2rD}(\mathbb{Q})/E_{2rD}(\mathbb{Q})_{\textrm{tors}}$.

The following lemma shows that the least additional parameter $r$ in $A_D$
has a small upper bound.
\begin{lemma}\label{lemma:minimal}
Given positive integers $k,m$, there exits a constant $c$ dependent only on $m$ and $k$ such that
for any sequences of pairwise different odd primes
$p_1,\ldots,p_k,$ signs $\epsilon_1,\ldots,\epsilon_k\in\{-1,1\},$ and an integer $a$ coprime to $m$, where $p_i\not| m$
for all $i$,
the least odd prime $l$
satisfying
$$\left( \frac{p_i}{l}\right)= \epsilon_i,\  1\le i\le k,\quad and\quad l\equiv a \mod m $$
is upper bounded by $c\log^2(\prod_{i=1}^k p_{i}).$

\end{lemma}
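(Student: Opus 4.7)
The plan is to recast the conditions as a Frobenius condition in an abelian extension of $\mathbb{Q}$ and then invoke the effective Chebotarev density theorem of Lagarias--Odlyzko under GRH. Set $p_i^{*}:=(-1)^{(p_i-1)/2}p_i$, so that $\left(\frac{p_i}{l}\right)=\chi_i(l)$ where $\chi_i$ is the quadratic Dirichlet character cutting out $\mathbb{Q}(\sqrt{p_i^{*}})$, with conductor $p_i$ or $4p_i$. Let
$$K:=\mathbb{Q}\bigl(\sqrt{p_1^{*}},\ldots,\sqrt{p_k^{*}},\zeta_m\bigr).$$
Because the $p_i$ are distinct odd primes with $p_i\nmid m$, the characters $\chi_1,\ldots,\chi_k$ together with the Dirichlet characters modulo $m$ are linearly independent, so $\operatorname{Gal}(K/\mathbb{Q})\cong(\mathbb{Z}/2\mathbb{Z})^{k}\times(\mathbb{Z}/m\mathbb{Z})^{*}$; the joint conditions $\left(\frac{p_i}{l}\right)=\epsilon_i$ and $l\equiv a\pmod m$ therefore specify a unique element $\sigma\in\operatorname{Gal}(K/\mathbb{Q})$. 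Ramification is harmless since the hypotheses force $l\nmid 2m\prod p_i$, so any such $l$ is unramified in $K$ and has a well-defined Frobenius.

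Next I would bound the discriminant of $K$. Since $K/\mathbb{Q}$ is abelian with conductor dividing $4m\prod_{i=1}^{k}p_i$, the conductor--discriminant formula gives
$$|d_K|=\prod_{\chi\in\widehat{\operatorname{Gal}(K/\mathbb{Q})}}f_{\chi}\leq\bigl(4m\cdot p_1\cdots p_k\bigr)^{[K:\mathbb{Q}]}\leq\bigl(4m\cdot p_1\cdots p_k\bigr)^{2^{k}\phi(m)},$$
so $\log|d_K|=O_{k,m}\!\left(\log\prod_{i=1}^{k}p_i\right)$, where the implied constant depends only on $k$ and $m$.

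Finally, I would apply the conditional effective Chebotarev density theorem: under GRH for the Dedekind zeta function of $K$, the least rational prime $l$ (unramified in $K$) whose Frobenius equals a prescribed conjugacy class $C\subset\operatorname{Gal}(K/\mathbb{Q})$ satisfies $l\ll(\log|d_K|)^{2}$ with an absolute implied constant. Applied to $C=\{\sigma\}$ and combined with the discriminant bound, this yields
$$l=O_{k,m}\!\left(\log^{2}\!\prod_{i=1}^{k}p_i\right),$$
which is the claimed inequality.

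The only genuine obstacle is the invocation of effective Chebotarev, which relies on GRH (already assumed in Theorem \ref{theorem:main1}); the reciprocity bookkeeping in the first step and the discriminant bound in the second are routine. One minor technical point worth verifying along the way is that the implicit constant in Lagarias--Odlyzko can be absorbed into a constant depending on $k$ and $m$ when one needs to also exclude the finitely many ramified primes and the prime $2$, which is immediate because the bound $(\log|d_K|)^{2}$ already grows to infinity as $\prod p_i$ does.
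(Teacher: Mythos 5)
Your proposal follows the same route as the paper: form the compositum of the relevant quadratic fields with $\mathbb{Q}(\zeta_m)$, check that the splitting and congruence conditions single out a unique element of the product Galois group, bound $\log|d_K|$ by $O_{k,m}(\log\prod_{i}p_i)$ via the conductor--discriminant formula, and invoke the GRH-effective Chebotarev bound $l\ll(\log|d_K|)^2$ (the paper cites Bach--Sorenson where you cite Lagarias--Odlyzko; these are the same circle of results). The structure, the discriminant estimate and the final bound all match.

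There is, however, one concrete slip in your first step. The condition to be imposed is $\left(\frac{p_i}{l}\right)=\epsilon_i$, i.e.\ ``$p_i$ is a square modulo $l$,'' and as a function of $l$ this is the quadratic character attached to $\mathbb{Q}(\sqrt{p_i})$ (of conductor $p_i$ or $4p_i$), not to $\mathbb{Q}(\sqrt{p_i^{*}})$. The character cutting out $\mathbb{Q}(\sqrt{p_i^{*}})$ is $l\mapsto\left(\frac{p_i^{*}}{l}\right)=\left(\frac{l}{p_i}\right)$, which differs from $\left(\frac{p_i}{l}\right)$ by the factor $(-1)^{\frac{p_i-1}{2}\frac{l-1}{2}}$. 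For example, with $k=1$, $p_1=3$, $m=1$, your field is $\mathbb{Q}(\sqrt{-3})$ and the Frobenius of $l$ there records only $l\bmod 3$, whereas $\left(\frac{3}{l}\right)$ depends on $l\bmod 12$. So, as written, the Frobenius of $l$ in your $K$ does not determine $\left(\frac{p_i}{l}\right)$ when $p_i\equiv 3\pmod{4}$ and $\sqrt{-1}\notin\mathbb{Q}(\zeta_m)$. The repair is immediate and is exactly what the paper does: take $K=\mathbb{Q}(\sqrt{p_1},\ldots,\sqrt{p_k},\zeta_m)$ (equivalently, adjoin $i$ to your field); the linear-disjointness, conductor bound and Chebotarev steps then go through verbatim, so the rest of your argument stands.
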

proof.
Let
\begin{displaymath}
d_i = \left\{ \begin{array}{ll}
p_i & \textrm{if $p_{i}\equiv 1\mod 4$}\\
4p_i & \textrm{otherwise}
\end{array} \right.,\quad (1\le i\le k)
\end{displaymath}
and $K_{i}=\mathbb{Q}(\sqrt{d_i}),1\le i\le k$. Then $\text{Gal}(K_i/\mathbb{Q})\cong\{1,-1\}$.
Identifying the two groups, we have  $\left(\frac{p_i}{l}\right)=\left(\frac{K_i/\mathbb{Q}}{l}\right)$.
The latter is the Artin symbol.

Let $K_0=\mathbb{Q}(\zeta_m)$, where $\zeta_m$ is a $m$-th primary root of unity. We have
$\text{Gal}(K_0/\mathbb{Q})\cong (\mathbb{Z}/m\mathbb{Z})^*$,
 and $l\equiv a\mod m$ if and only if $\left(\frac{K_0/\mathbb{Q}}{l}\right)=a\mod m$.

Let $K=\mathbb{Q}(\sqrt{p_1},\ldots,\sqrt{p_k},\zeta_m)$,  which is an abelian extension of $\mathbb{Q}$.
An odd prime $l$ is un-ramified in $K$ if and only if $l\not| m\prod_{i=1}^kd_i$. For such $l$, we have
$$\left(  \frac{K/\mathbb{Q}}{l} \middle)\right|_{K_i}= \left(  \frac{K_i/\mathbb{Q}}{l} \right)\quad (0\le i\le k).$$
Since $p_1,\cdots,p_k$ are pairwise different and $(p_1\cdots p_k,m)=1$, we have $K_i\cap K_j=\mathbb Q$ for
$0\le i\not=j\le k$, and so we have the isomorphism
\[
\Gal(K/\mathbb Q)\cong\prod_{0\le i\le k}\Gal(K_i/\mathbb Q)\quad \sigma\mapsto (\sigma |_{K_i})_{0\le i\le k}.
\]
Thus there exists a unique $\sigma\in\text{Gal}(K/\mathbb{Q})$ such that
$\sigma|_{K_i}=\epsilon_i$ for $1\le i\le k$ and $\sigma|_{K_0}=a\mod m$.
By Theorem 3.1(3) in \cite{Bach}, there exists a prime $l \le (1+o(1))\log^2 (|\Delta_K|)$ such that
$\left(  \frac{K/\mathbb{Q}}{l} \right)=\sigma$, where $\Delta_K$ is the discriminant of $K$. For fixed $k$ and $m$,
one can get easily that
there exists a constant $c'>0$ satisfying $|\Delta_K|<c'(\prod_{i=1}^k p_i)^{m2^{k-1}}$. Thus we can find a constant $c$,
dependent on $m$ and $k$, such that there exists a prime $l<c\log^2(\prod_{i=1}^kp_i)$ satisfying
$\left(\frac{p_i}{l}\right)=\epsilon_i$ for $1\le i\le k$, and $l\equiv a\mod m$.

Lemma \ref{lemma:minimal} shows that there exists an integer $r\in O(\log^4D)$ such that the elliptic curve $E_{2rD}$ has conjectural rank one and \[ v_p(x([k]Q))\not=v_q(x([k]Q)),\ \ \text{for any odd integer}\ k,\]
where $Q$ is the generator of $E_{2rD}(\mathbb{Q})/E_{2rD}(\mathbb{Q})_{\textrm{tors}}$.

\end{proof}

\begin{proof}[{\bf Proof of Corollary \ref{corollary:basic}}]
It follows directly from the theorem \ref{theorem:main1}.
\end{proof}

\begin{remark}
Theorem \ref{theorem:main1} tells us that any point of infinite order in $E_{2rD}(\mathbb{Q})$ can be used to factor $D$ (if the point is of even order, use the duplication formula to "halving" it). Since rank$(E_{2rD}(\mathbb{Q}))$ is equal to one, beside standard methods, there are two other methods of searching for a point of infinite order. One method (see \cite{Silverman3}) is the canonical height search algorithm, the other method (see\cite{Elkies} and \cite{Gross}) is the Heegner point algorithm, both methods are still quite efficiently for conductor of moderately large(say $10^5$ to $10^7$). But both seem quite impractical
in general if $D$ is greater than $10^{8}$.
\end{remark}

\end{document}